\documentclass[reqno]{amsart}

\usepackage[T1]{fontenc}
\usepackage[utf8]{inputenc}
\usepackage{graphicx}
\usepackage{mathtools}
\usepackage{tikz}
\usetikzlibrary{arrows, backgrounds, calc, chains, decorations, patterns, positioning, shapes}

\usepackage{geometry}
\IfFormatAtLeastTF{2026-06-01}{}{\usepackage{thmtools}}
\usepackage[pdfpagelabels]{hyperref}
\usepackage{cleveref}
\usepackage{subcaption}
\usepackage{parskip}

\usepackage{amsfonts}
\usepackage{amsmath}
\usepackage{amsrefs}
\usepackage{amssymb}
\usepackage{amstext}
\usepackage{amsthm}

\usepackage{silence}
\theoremstyle{plain}
\newtheorem{theorem}{Theorem}[section]
\newtheorem{conjecture}[theorem]{Conjecture}

\newtheorem{lemma}[theorem]{Lemma}
\newtheorem{proposition}[theorem]{Proposition}

\theoremstyle{definition}
\newtheorem{definition}[theorem]{Definition}

\theoremstyle{remark}
\newtheorem{remark}[theorem]{Remark}
\newtheorem{example}[theorem]{Example}

\newcommand{\st}{^\mathsf{st}}

\renewcommand{\th}{^\mathsf{th}}

\DeclareMathOperator{\LD}{LD}

\DeclareMathOperator{\area}{area}
\DeclareMathOperator{\dinv}{dinv}
\DeclareMathOperator{\touch}{touch}

\newcommand{\N}{\mathbb{N}}

\newcommand{\Ht}{\widetilde{H}}

\newcommand{\dr}{\mathsf{dr}}
\newcommand{\dv}{\mathsf{dv}}
\newcommand{\mpe}{\mathsf{mp}}

\let\oldTheta\Theta
\let\Theta\undefined
\DeclareMathOperator{\Theta}{\oldTheta}

\makeatletter
\providecommand*{\shuffle}{%
  \mathbin{\mathpalette\shuffle@{}}%
}
\newcommand*{\shuffle@}[2]{%
  \sbox0{$#1\vcenter{}$}%
  \kern .15\ht0 % side bearing
  \rlap{\vrule height .25\ht0 depth 0pt width 2.5\ht0}%
  \raise.1\ht0\hbox to 2.5\ht0{%
    \vrule height 1.75\ht0 depth -.1\ht0 width .17\ht0 %
    \hfill
    \vrule height 1.75\ht0 depth -.1\ht0 width .17\ht0 %
    \hfill
    \vrule height 1.75\ht0 depth -.1\ht0 width .17\ht0 %
  }%
  \kern .15\ht0 % side bearing
}
\makeatother

\newcommand{\qbinom}[3][q]{\genfrac{[}{]}{0pt}{}{#2}{#3}_{#1}}

\newcommand{\<}{\langle}
\renewcommand{\>}{\rangle}

\makeatletter
\pgfkeys{
    /tikz/sharp angle/.code={%
        \pgfsetarrowoptions{sharp >}{#1}%
        \pgfsetarrowoptions{sharp <}{-#1}%
    },
    /tikz/sharp > angle/.code={%
        \pgfsetarrowoptions{sharp >}{#1}%
    },
    /tikz/sharp < angle/.code={%
        \pgfsetarrowoptions{sharp <}{#1}%
    },
    /tikz/sharp protrude/.code=\csname if#1\endcsname\qrr@tikz@sharp@z@-0.05\p@\else\qrr@tikz@sharp@z@\z@\fi,
    /tikz/sharp protrude/.default=true
}

\newdimen\qrr@tikz@sharp@z@
\qrr@tikz@sharp@z@\z@
\pgfarrowsdeclare{sharp >}{sharp >}{%
    \edef\pgf@marshal{\noexpand\pgfutil@in@{and}{\pgfgetarrowoptions{sharp >}}}%
    \pgf@marshal
    \ifpgfutil@in@
    \edef\pgf@tempa{\pgfgetarrowoptions{sharp >}}
    \expandafter\qrr@tikz@sharp@parse\pgf@tempa\@qrr@tikz@sharp@parse
    \else
    \qrr@tikz@sharp@parse\pgfgetarrowoptions{sharp >}and-\pgfgetarrowoptions{sharp >}\@qrr@tikz@sharp@parse
    \fi
    \pgfmathparse{max(\pgf@tempa,\pgf@tempb,0)}%
    \let\qrr@tikz@sharp@max\pgfmathresult
    \pgfmathsetlength\pgf@xa{.5*\pgflinewidth * tan(\qrr@tikz@sharp@max)}%
    \pgfarrowsleftextend{+\pgf@xa}%
    \pgfarrowsrightextend{+\pgf@xa}%
}{%
    \edef\pgf@marshal{\noexpand\pgfutil@in@{and}{\pgfgetarrowoptions{sharp >}}}%
    \pgf@marshal
    \ifpgfutil@in@
    \edef\pgf@tempa{\pgfgetarrowoptions{sharp >}}
    \expandafter\qrr@tikz@sharp@parse\pgf@tempa\@qrr@tikz@sharp@parse
    \else
    \qrr@tikz@sharp@parse\pgfgetarrowoptions{sharp >}and-\pgfgetarrowoptions{sharp >}\@qrr@tikz@sharp@parse
    \fi
    \pgfmathsetlength\pgf@ya{.5*\pgflinewidth * tan(max(\pgf@tempa,\pgf@tempb,0))}%
    \pgfmathsetlength\pgf@xa{-.5*\pgflinewidth * tan(\pgf@tempa)}%
    \pgfmathsetlength\pgf@xb{-.5*\pgflinewidth * tan(\pgf@tempb)}%
    \advance\pgf@xa\pgf@ya
    \advance\pgf@xb\pgf@ya
    \ifdim\pgf@xa>\pgf@xb
    \pgftransformyscale{-1}%
    \pgf@xc\pgf@xb
    \pgf@xb\pgf@xa
    \pgf@xa\pgf@xc
    \fi
    \pgfpathmoveto{\pgfqpoint{\qrr@tikz@sharp@z@}{.5\pgflinewidth}}%
    \pgfpathlineto{\pgfqpoint{\pgf@xa}{.5\pgflinewidth}}%
    \pgfpathlineto{\pgfqpoint{\pgf@ya}{+0pt}}%
    \pgfpathlineto{\pgfqpoint{\pgf@xb}{-.5\pgflinewidth}}%
    \pgfpathlineto{\pgfqpoint{\qrr@tikz@sharp@z@}{-.5\pgflinewidth}}%
    \pgfusepathqfill
}
\pgfarrowsdeclare{sharp <}{sharp <}{%
    \edef\pgf@marshal{\noexpand\pgfutil@in@{and}{\pgfgetarrowoptions{sharp <}}}%
    \pgf@marshal
    \ifpgfutil@in@
    \edef\pgf@tempa{\pgfgetarrowoptions{sharp <}}
    \expandafter\qrr@tikz@sharp@parse\pgf@tempa\@qrr@tikz@sharp@parse
    \else
    \expandafter\qrr@tikz@sharp@parse\pgfgetarrowoptions{sharp <}and-\pgfgetarrowoptions{sharp <}\@qrr@tikz@sharp@parse
    \fi
    \pgfmathparse{max(\pgf@tempa,\pgf@tempb,0)}%
    \let\qrr@tikz@sharp@max\pgfmathresult
    \pgfmathsetlength\pgf@xa{.5*\pgflinewidth * tan(\qrr@tikz@sharp@max)}%
    \pgfarrowsleftextend{+\pgf@xa}%
    \pgfarrowsrightextend{+\pgf@xa}%
}{%
    \edef\pgf@marshal{\noexpand\pgfutil@in@{and}{\pgfgetarrowoptions{sharp <}}}%
    \pgf@marshal
    \ifpgfutil@in@
    \edef\pgf@tempa{\pgfgetarrowoptions{sharp <}}
    \expandafter\qrr@tikz@sharp@parse\pgf@tempa\@qrr@tikz@sharp@parse
    \else
    \expandafter\qrr@tikz@sharp@parse\pgfgetarrowoptions{sharp <}and-\pgfgetarrowoptions{sharp <}\@qrr@tikz@sharp@parse
    \fi
    \pgfmathsetlength\pgf@ya{.5*\pgflinewidth * tan(max(\pgf@tempa,\pgf@tempb,0))}%
    \pgfmathsetlength\pgf@xa{-.5*\pgflinewidth * tan(\pgf@tempa)}%
    \pgfmathsetlength\pgf@xb{-.5*\pgflinewidth * tan(\pgf@tempb)}%
    \advance\pgf@xa\pgf@ya
    \advance\pgf@xb\pgf@ya
    \ifdim\pgf@xa>\pgf@xb
    \pgftransformyscale{-1}%
    \pgf@xc\pgf@xb
    \pgf@xb\pgf@xa
    \pgf@xa\pgf@xc
    \fi
    \pgfpathmoveto{\pgfqpoint{\qrr@tikz@sharp@z@}{.5\pgflinewidth}}%
    \pgfpathlineto{\pgfqpoint{\pgf@xa}{.5\pgflinewidth}}%
    \pgfpathlineto{\pgfqpoint{\pgf@ya}{+0pt}}%
    \pgfpathlineto{\pgfqpoint{\pgf@xb}{-.5\pgflinewidth}}%
    \pgfpathlineto{\pgfqpoint{\qrr@tikz@sharp@z@}{-.5\pgflinewidth}}%
    \pgfusepathqfill
}
\def\qrr@tikz@sharp@parse#1and#2\@qrr@tikz@sharp@parse{\def\pgf@tempa{#1}\def\pgf@tempb{#2}}
\makeatother

\title{The Theta Conjecture}

\author{Giovanni Interdonato}
\address{
    Classe di Scienze \newline \indent
    Scuola Normale Superiore \newline \indent
    Pisa, PI, 56126, Italia
}
\email{giovanni.interdonato@sns.it}

\author{Alessandro Iraci}
\address{
    Facoltà di Ingegneria e Informatica \newline \indent
    Università Pegaso \newline \indent
    Napoli, NA, 80143, Italia
}
\email{alessandro.iraci@unipegaso.it}

\begin{document}

\begin{abstract}
    We define a diagonal-inversion statistic on labeled Dyck paths carrying both decorated rises and decorated contractible valleys. This gives an explicit candidate for a bivariate refinement of the univariate Theta conjecture of D'Adderio, Iraci, and Vanden Wyngaerd, recently proved by D'Adderio, Pagaria, and the authors of this work. The univariate conjecture can be recovered from our bivariate version by setting $q=1$.

    The new conjecture recovers the rise and valley versions of the Delta conjecture when either decoration parameter vanishes, and provides a combinatorial interpretation of the symmetric function $\Theta_{e_l} \Theta_{e_k}\nabla e_{n-k-l}$, which is also conjectured to be the Frobenius characteristic of a certain graded module of diagonal coinvariants with two sets of commuting variables and two sets of anticommuting variables.
    
    In support of the Theta conjecture and its touching refinement, we prove its Schröder case, that is, the scalar product of the symmetric function side with $e_{n-d} h_d$ matches the combinatorial side restricted to Schröder paths. The proof uses a finer combinatorial argument than the previously known cases, leveraging two Gaussian product identities.
\end{abstract}

\maketitle
\tableofcontents

\section{Introduction}

Macdonald polynomials are a central family of symmetric functions introduced by Ian Macdonald \cite{Macdonald1988}. They depend on two parameters $q$ and $t$, whose rich structure connects algebraic combinatorics with representation theory and geometry \cites{Macdonald1995Book,Haiman1999MacdonaldPolynomialsGeometry}.

In the 1990s, Garsia and Haiman conjectured the Schur positivity of the (modified) Macdonald polynomials, claiming them to be the bigraded Frobenius characteristic of certain Garsia--Haiman modules \cite{GarsiaHaiman1993GradedRepresentationModel}. Their prediction was confirmed in 2001, when Haiman used the algebraic geometry of the Hilbert scheme to prove that the dimension of their modules is $n!$ \cite{Haiman2001nFactorial}, thus proving the $n!$ theorem.

In the course of these developments, it became clear that remarkable connections were to be found between Macdonald polynomial theory and the representation theory of the symmetric group. For example, during their quest for Macdonald positivity, Garsia and Haiman introduced the $\mathfrak{S}_n$-module of \emph{diagonal harmonics}, i.e.\ the coinvariants of the diagonal action of $\mathfrak{S}_n$ on polynomials in two sets of $n$ variables, and they conjectured that its Frobenius characteristic is given by $\nabla e_n$, where $\nabla$ is the \emph{nabla} operator on symmetric functions introduced in \cite{BergeronGarsiaHaimanTesler1999IdentitiesPositivityConjectures}, which acts diagonally on Macdonald polynomials. Haiman proved this conjecture in 2002 \cite{Haiman2002HilbertScheme}.

The positivity of $\nabla e_n$ led Haglund, Haiman, Loehr, Remmel, and Ulyanov to conjecture a combinatorial formula for it in terms of labeled Dyck paths weighted by the statistics $\dinv$ and $\area$, known as the \emph{shuffle conjecture} \cite{HaglundHaimanLoehrRemmelUlyanov2005ShuffleConjecture}. This conjecture was proved in 2018 by Carlsson and Mellit \cite{CarlssonMellit2018ShuffleConjecture}, and it has since inspired a wide range of research in the area.

The \emph{Delta conjecture}, first proposed in \cite{HaglundRemmelWilson2018DeltaConjecture}, extends the shuffle conjecture to the symmetric function $\Delta'_{e_{n-k-1}} e_n$, where $\Delta'_{f}$ is a certain operator on symmetric functions. The Delta conjecture predicts two different combinatorial formulas for this symmetric function, one in terms of decorated Dyck paths with $k$ decorated rises and another in terms of decorated Dyck paths with $k$ decorated contractible valleys. The rise version is now a theorem: D'Adderio and Mellit proved its compositional refinement \cite{DAdderioMellit2022CompositionalDelta}, while Blasiak, Haiman, Morse, Pun, and Seelinger independently proved the extended rise version \cite{BHMPS2023ProofExtendedDelta}. The full valley version remains open.

This result extends the connection with the representation theory of the symmetric group, as the symmetric function $\Delta'_{e_{n-k-1}} e_n$ is conjectured to be the Frobenius characteristic of a larger module of diagonal coinvariants with two sets of commuting variables and one set of anticommuting variables. This conjecture was first proposed by Zabrocki in \cite{Zabrocki2019ModuleDeltaConjecture}; the special case $t=0$, which corresponds to a module with one set of commuting variables and one set of anticommuting variables, was recently proved by Murai, Rhoades, and Wilson in \cite{MuraiRhoadesWilson2025Fields}.

The Theta operators introduced by D'Adderio, Iraci, and Vanden Wyngaerd \cite{DAdderioIraciVandenWyngaerd2021ThetaOperators} express the symmetric function side of the Delta conjecture as $\Theta_{e_k} \nabla e_{n-k}$ and suggest a common extension of the two combinatorial models. The expression $\Theta_{e_l} \Theta_{e_k}\nabla e_{n-k-l}$ is symmetric in $k$ and $l$, and it is conjectured to describe the component of exterior bidegree $(k,l)$ of diagonal coinvariants with two sets of commuting variables and two sets of anticommuting variables. This extends the module prediction of Zabrocki \cite{Zabrocki2019ModuleDeltaConjecture}. The purely fermionic specialization $q=t=0$, corresponding to two sets of anticommuting variables only, is known \cite{IraciRhoadesRomero2023FermionicTheta}.

In \cite[Conjecture~9.1]{DAdderioIraciVandenWyngaerd2021ThetaOperators}, a formula for $\left. \Theta_{e_l} \Theta_{e_k}\nabla e_{n-k-l} \right\rvert_{q=1}$ was proposed using labeled Dyck paths with $k$ decorated rises and $l$ decorated contractible valleys. This formula, including a refinement by the number of undecorated touches of the main diagonal, was proved in \cite[Theorems~6.1 and~6.21]{DIIP2026LeavingTheHall}.
At $t=0$, Iraci, Nadeau, and Vanden Wyngaerd obtained a different combinatorial formula through segmented Smirnov words, and transported its statistic to area-zero doubly decorated Dyck paths \cite[Section~4.5]{IraciNadeauVandenWyngaerd2024Smirnov}. Finding an explicit path statistic compatible with both kinds of decorations in positive area remained open.

In this paper we propose such a $\dinv$ statistic and formulate the corresponding bivariate Theta conjecture. Its new terms account for interactions between decorated rises and valleys, using a distinguished class of decorated valleys called \emph{star valleys}. The statistic was discovered with assistance from Aristotle \cite{Achim2025Aristotle}. When either decoration parameter is zero, the $\dinv$ reduces to the usual $\dinv$ appearing in the Delta conjecture. We also formulate a touching refinement with symmetric-function side $\Theta_{e_l}\Theta_{e_k}\nabla E_{n-k-l,r}$.
Notably, the area-zero restriction of our $\dinv$ does not coincide pointwise with the statistic transported from Smirnov words; proving the required equidistribution, and hence the new formula at $t=0$, remains open.

To support the Theta conjecture, we prove its Schröder case, that is, we verify that the scalar product of the symmetric function side with $e_{n-d} h_d$ matches the combinatorial side restricted to Schröder paths. This result is a generalization of the Schröder case of the Delta conjecture, proved in \cite{DAdderioIraciVandenWyngaerd2019GeneralizedDeltaSchroeder} for the rise version (now superseded by \cite{DAdderioMellit2022CompositionalDelta}, which proves the full rise version) and in \cite{DAdderioIraci2023} for the valley version. Our proof refines the known ones but also requires a finer combinatorial argument, leveraging two Gaussian product identities. We show via an explicit counterexample that a direct $q$-binomial interpretation of the various kinds of diagonal inversions does not hold.

In \Cref{sec:symmetric_functions} we collect the symmetric-function tools. In \Cref{sec:combinatorics} we define the statistic and state the conjectures. In \Cref{sec:schroeder} we prove the Schröder case. In \Cref{sec:ai} we describe the discovery procedure, computational evidence, and use of AI-assisted tools.

\section{Symmetric functions}
\label{sec:symmetric_functions}

We denote by $\Lambda$ the graded algebra of symmetric functions with coefficients in $\mathbb{Q}(q,t)$.

The standard bases of symmetric functions that will appear in our calculations are the monomial $\{m_\lambda\}_{\lambda}$, complete $\{h_{\lambda}\}_{\lambda}$, elementary $\{e_{\lambda}\}_{\lambda}$, power $\{p_{\lambda}\}_{\lambda}$, and Schur $\{s_{\lambda}\}_{\lambda}$ bases.

We denote by $\<\, , \>$ the \emph{Hall scalar product} on $\Lambda$, defined by \[ \< p_\lambda, p_\mu \> = z_\lambda \delta_{\lambda,\mu}, \quad \text{ where } \quad z_\lambda = \prod_{i \geq 1} i^{m_i(\lambda)} m_i(\lambda)! \] and $m_i(\lambda)$ is the multiplicity of $i$ in $\lambda$. With respect to this scalar product, the bases $\{h_\lambda\}_\lambda$ and $\{m_\lambda\}_\lambda$ are dual, and the Schur basis $\{s_\lambda\}_\lambda$ is orthonormal.

For $f \in \Lambda$, we denote by $f^\perp$ the operator adjoint to multiplication by $f$ with respect to the Hall scalar product, that is, for every $g, h \in \Lambda$, we have $\< f^\perp g, h \> = \< g, fh \>$.

When relevant, we will use the convention $e_0 = h_0 = 1$ and $e_k = h_k = 0$ for $k < 0$.

\subsection{\texorpdfstring{$q$}{q}-analogs}
\label{ssec:q-analogs}

It is convenient to introduce the so-called $q$-notation. In general, a $q$-analog of an expression is a generalization involving a parameter $q$ that reduces to the original one for $q \rightarrow 1$.

\begin{definition}
	For a natural number $n \in \mathbb{N}$, we define its $q$-analog as \[ [n]_q \coloneqq \frac{1-q^n}{1-q} = 1 + q + q^2 + \dots + q^{n-1}. \]
\end{definition}

Given this definition, one can define the $q$-factorial and the $q$-binomial as follows.

\begin{definition}
	We define \[ [n]_q! \coloneqq \prod_{k=1}^{n} [k]_q \quad \text{and} \quad \qbinom{n}{k} \coloneqq \frac{[n]_q!}{[k]_q![n-k]_q!} \]
\end{definition}

\begin{definition}
	For any variable $z$ and $n \in \N \cup \{ \infty \}$, we define the \emph{$q$-Pochhammer symbol} as \[ (z;q)_n \coloneqq \prod_{j=0}^{n-1} (1-zq^j) = (1-z) (1-zq) (1-zq^2) \cdots (1-zq^{n-1}). \]
\end{definition}

Notice that the identity \[\qbinom{n}{k}=\frac{\left(q^{n+1-k};q\right)_k}{\left(q;q\right)_k}\]
allows us to define the $q$-binomial for general integers $n,k$ with $k\ge0$. In particular we have that $\displaystyle\qbinom{n}{k}=0$ if $k>n\ge0$ and that $\displaystyle\qbinom{-1}{0}=1$.

We recall some well-known identities regarding $q$-binomials. Proofs can be found, for example, in \cite[Section~3.2]{Sag20}.

\begin{theorem}[$q$-Pascal identities]
    For $n \geq k \geq 0$,
    \begin{equation}
        \qbinom{n+1}{k+1}= q^{k+1} \qbinom{n}{k+1} +\qbinom{n}{k} = \qbinom{n}{k+1} + q^{n-k} \qbinom{n}{k}. \label{eq:q_pascal}
    \end{equation}
\end{theorem}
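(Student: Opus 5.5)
We prove both identities directly from the factorial definition $\qbinom{n}{k} = \frac{[n]_q!}{[k]_q![n-k]_q!}$, which applies since $n \geq k \geq 0$ makes every factorial involved well defined. The plan is to put each right-hand side over the common denominator $[k+1]_q!\,[n-k]_q!$ and reduce the resulting numerator to a single $q$-integer identity, namely $[n+1]_q$ written as a sum of two shifted $q$-integers.

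For the first identity, we write
\[ q^{k+1}\qbinom{n}{k+1} + \qbinom{n}{k} = \frac{[n]_q!}{[k+1]_q!\,[n-k]_q!}\Bigl( q^{k+1}[n-k]_q + [k+1]_q \Bigr). \]
This uses $\qbinom{n}{k+1} = \frac{[n]_q!}{[k+1]_q![n-k-1]_q!}$, multiplied by $\frac{[n-k]_q}{[n-k]_q}$, together with $\qbinom{n}{k} = \frac{[n]_q!}{[k]_q![n-k]_q!}$, multiplied by $\frac{[k+1]_q}{[k+1]_q}$. It therefore suffices to check that
\[ [k+1]_q + q^{k+1}[n-k]_q = [n+1]_q. \]
This holds because $[k+1]_q = 1+q+\dots+q^{k}$ and $q^{k+1}[n-k]_q = q^{k+1}+\dots+q^{n}$. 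Multiplying by $\frac{[n]_q!}{[k+1]_q![n-k]_q!}$ then gives $\frac{[n+1]_q!}{[k+1]_q!\,[n-k]_q!} = \qbinom{n+1}{k+1}$, as required.

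The second identity follows in the same way. The bracket becomes $[n-k]_q + q^{n-k}[k+1]_q$, and this equals $[n+1]_q$ by splitting $1+q+\dots+q^n$ after its first $n-k$ terms instead of after its first $k+1$ terms.

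The only point needing care is the boundary case $k=n$. There $\qbinom{n}{n+1}=0$, while the formula with $[n-k-1]_q! = [-1]_q!$ is not literally defined. We handle this case separately: both sides of each identity then reduce to $\qbinom{n+1}{n+1}=\qbinom{n}{n}=1$. Alternatively, the argument goes through directly if we note that the term $[n-k]_q = [0]_q = 0$ kills the problematic summand.

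Beyond this case, we expect no real obstacle: the argument is a routine manipulation of $q$-integers.
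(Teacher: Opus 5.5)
Your proof is correct. Putting both right-hand sides over $[k+1]_q!\,[n-k]_q!$ reduces each identity to splitting $[n+1]_q = 1+q+\dots+q^n$, either after its first $k+1$ terms or after its first $n-k$ terms.

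Your treatment of $k=n$ matches the paper's convention. The Pochhammer definition gives $\qbinom{n}{n+1} = (1;q)_{n+1}/(q;q)_{n+1} = 0$. The rewriting $\qbinom{n}{k+1} = [n]_q!\,[n-k]_q/([k+1]_q!\,[n-k]_q!)$ also returns $0$ there, so your ``alternatively'' remark is valid. Only your opening claim that every factorial involved is well defined needs softening, as you note yourself.

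The paper gives no proof of these identities; it defers to a textbook reference, so there is no competing argument in the text. The other standard route is combinatorial. Read $\qbinom{n+1}{k+1}$ as the inversion generating function of binary words with $k+1$ ones and $n-k$ zeros.
\begin{itemize}
\item Conditioning on the last letter gives the first identity: a final $0$ is inverted with all $k+1$ ones.
\item Conditioning on the first letter gives the second: an initial $1$ is inverted with all $n-k$ zeros.
\end{itemize}
That version has no boundary case, since $\qbinom{n}{n+1}$ is just an empty count, and it explains the powers of $q$. However, it requires identifying this generating function with the factorial formula, which is often proved by exactly your computation.

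The paper uses these identities purely algebraically, in the inductive proofs of the product formulas for $T_{s,h}(z)$ and $U_{s,c}(z)$. There the case $k=n$ genuinely occurs: $h=s$ in the first induction and $s=1$ in the second. So your direct verification, including the boundary case, is the natural self-contained choice.
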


\begin{theorem}[$q$-binomial Theorem]\label{thm:q-binomial}
    For $n \geq 0$,
    \[ \prod_{k=0}^{n-1}(1 + z q^k) = \sum_{k=0}^{n} q^{\binom{k}{2}} \qbinom{n}{k} z^k. \]
\end{theorem}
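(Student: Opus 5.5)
We argue by induction on $n$, using the $q$-Pascal identities \eqref{eq:q_pascal} as the only input. For $n=0$ the product on the left is empty and equals $1$, while the right-hand side reduces to the single term $k=0$, namely $q^{0}\qbinom{0}{0}z^0=1$. For the inductive step, we assume the identity for $n$ and write
\[
\prod_{k=0}^{n}(1+zq^k) = \Bigl(\prod_{k=0}^{n-1}(1+zq^k)\Bigr)(1+zq^n) = \sum_{k=0}^{n} q^{\binom{k}{2}}\qbinom{n}{k}z^k + \sum_{k=0}^{n} q^{\binom{k}{2}+n}\qbinom{n}{k}z^{k+1}.
\]

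Next we collect the coefficient of $z^k$ for $0\le k\le n+1$. It equals
\[
q^{\binom{k}{2}}\qbinom{n}{k} + q^{\binom{k-1}{2}+n}\qbinom{n}{k-1},
\]
with the conventions $\qbinom{n}{n+1}=0$ and $\qbinom{n}{-1}=0$ covering the boundary terms. Since $\binom{k-1}{2}=\binom{k}{2}-(k-1)$, this becomes
\[
q^{\binom{k}{2}}\Bigl(\qbinom{n}{k} + q^{n-k+1}\qbinom{n}{k-1}\Bigr).
\]
The bracket is exactly the second form of \eqref{eq:q_pascal}, with $(n,k)$ there replaced by $(n,k-1)$, so it equals $\qbinom{n+1}{k}$ for $1\le k\le n+1$. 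The boundary case $k=0$ is checked directly: both sides equal $1$. This gives the identity for $n+1$.

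The only step that needs care is the exponent bookkeeping: one has to match $q^{n}$ against the shift $\binom{k}{2}-\binom{k-1}{2}=k-1$ so that the correct one of the two Pascal forms applies. The range conditions in \eqref{eq:q_pascal} also have to cover the endpoints $k=0$ and $k=n+1$. These are handled either directly, as above, or by the extended definition of the $q$-binomial, under which it vanishes when the lower index exceeds the upper one.
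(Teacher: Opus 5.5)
Your proof is correct. The coefficient extraction and the exponent shift $\binom{k}{2}-\binom{k-1}{2}=k-1$ are right. So is the use of the second form of \eqref{eq:q_pascal} with $k$ replaced by $k-1$: it is valid exactly for $1\le k\le n+1$, and at $k=n+1$ it involves $\qbinom{n}{n+1}=0$, which holds under the paper's extended definition. Treating the $z^0$ coefficient separately is also what is needed.

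The paper gives no proof of its own here and simply refers to \cite[Section~3.2]{Sag20}, so there is no argument in the paper to compare yours against. Your induction uses only the $q$-Pascal identities already recorded in the paper. It is the same kind of manipulation the paper performs on products $\prod_j(1+zq^j)$ in the proof of \Cref{lem:TU_formulas}.

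A common alternative route is combinatorial:
\begin{itemize}
\item Expand the product as a sum over subsets $S\subseteq\{0,\dots,n-1\}$ with weight $z^{|S|}q^{\sum S}$.
\item Observe that $\sum_{|S|=k}q^{\sum S}=q^{\binom{k}{2}}\qbinom{n}{k}$. Subtracting $0,1,\dots,k-1$ from the increasingly sorted elements of $S$ gives a bijection with partitions fitting in a $k\times(n-k)$ box.
\end{itemize}
That route avoids the boundary bookkeeping. However, it needs the box-partition interpretation of $\qbinom{n}{k}$, which the paper never states, whereas your argument is self-contained given \eqref{eq:q_pascal}.
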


We also recall one identity from \cite{DAdderioIraci2023}.

\begin{proposition}[{\cite[Lemma~4.8]{DAdderioIraci2023}}]
    \label{prop:q_sum}
    For $r, p, b \geq 0$,
    \[ q^{\binom{p}{2}} \qbinom{r}{p} \qbinom{r+b-1}{b} = \sum_{c=0}^{b} q^{\binom{p+c}{2}} \qbinom{r}{p+c} q^{\binom{c}{2}} \qbinom{p+c}{c} \qbinom{p+b-1}{b-c}. \]
\end{proposition}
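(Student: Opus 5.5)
We will prove the identity of \Cref{prop:q_sum} by induction on $b$, using only the $q$-Pascal identities \eqref{eq:q_pascal}, and regard $r$ and $p$ as fixed. For $b=0$ the only term is $c=0$, and it equals $q^{\binom p2}\qbinom rp\qbinom{p-1}{0}$, which matches the left-hand side because $\qbinom{r-1}{0}=1$. Here we use the convention $\qbinom{-1}{0}=1$ when $p=0$ or $r=0$.

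A cleaner route, which we would try first, is to simplify the summand before doing any induction. The trinomial revision identity $\qbinom{r}{p+c}\qbinom{p+c}{c}=\qbinom rp\qbinom{r-p}{c}$ holds at the level of $q$-factorials. Using it, and noting that $\binom{p+c}{2}-\binom p2=\binom c2+pc$, we divide both sides by $q^{\binom p2}\qbinom rp$. If $\qbinom rp=0$ both sides already vanish. The statement then reduces to
\[ \qbinom{r+b-1}{b}=\sum_{c=0}^{b} q^{c^2-c+pc}\qbinom{r-p}{c}\qbinom{p+b-1}{b-c}, \]
since $2\binom c2=c^2-c$. Set $m=r-p\ge 0$ and $n=p+b-1$, so that $m+n=r+b-1$. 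The claim becomes a $q$-Vandermonde convolution:
\[ \qbinom{m+n}{b}=\sum_{c} q^{c(c-1)+pc}\qbinom{m}{c}\qbinom{n}{b-c}. \]

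The standard $q$-Vandermonde identity reads $\qbinom{m+n}{b}=\sum_c q^{c(n-b+c)}\qbinom mc\qbinom n{b-c}$. In our setting $n-b=p-1$, so its exponent is $c(p-1+c)=c^2-c+pc$, which agrees exactly with ours. We would justify this Vandermonde identity from \Cref{thm:q-binomial}. Expand $\prod_{k=0}^{m+n-1}(1+zq^k)$ as the product $\prod_{k=0}^{n-1}(1+zq^k)\cdot\prod_{k=0}^{m-1}(1+zq^{n}zq^{k}/z)$, that is, with the second factor taken in the variable $zq^n$. Then compare the coefficients of $z^b$. The coefficient from the left gives $q^{\binom b2}\qbinom{m+n}b$, and the right gives $\sum_c q^{\binom{b-c}2+\binom c2+nc}\qbinom n{b-c}\qbinom mc$. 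We then check that $\binom{b-c}2+\binom c2+nc-\binom b2=c(n-b+c)$, which is a direct computation.

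The main obstacle is the boundary cases where $n=p+b-1$ could be $-1$, namely $p=b=0$. This case is exactly the base case already handled above. For $b\ge1$ or $p\ge1$ we have $n\ge0$, so the $q$-binomial theorem applies verbatim. The case $r<p$ forces both sides to be zero, as noted earlier.
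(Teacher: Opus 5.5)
Your argument is correct. The paper gives no proof of this statement and simply imports it from \cite[Lemma~4.8]{DAdderioIraci2023}, so your derivation is a self-contained substitute rather than a variant of an argument in the text.

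The key steps all check out:
\begin{itemize}
  \item The trinomial revision $\qbinom{r}{p+c}\qbinom{p+c}{c}=\qbinom{r}{p}\qbinom{r-p}{c}$ holds.
  \item Combined with $\binom{p+c}{2}=\binom{p}{2}+\binom{c}{2}+pc$, it reduces the claim to the $q$-Vandermonde convolution with $m=r-p$ and $n=p+b-1$.
  \item The Vandermonde exponent $c(n-b+c)=c(c-1)+pc$ matches yours exactly.
  \item Splitting $\prod_{k=0}^{m+n-1}(1+zq^k)$ at $k=n$ and applying \Cref{thm:q-binomial} correctly gives that convolution. The identity $\binom{b-c}{2}+\binom{c}{2}+nc-\binom{b}{2}=c(n-b+c)$ is right.
\end{itemize}

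Your boundary bookkeeping is also sound. For $r<p$, every term on both sides contains some $\qbinom{r}{j}$ with $j>r\geq 0$, so both sides vanish. The only case where a negative top index appears is $p=b=0$, which you verify directly. Otherwise $m,n\geq 0$, so \Cref{thm:q-binomial} applies as stated.

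Two cosmetic points. First, you announce an induction on $b$ that is never carried out; only its base case is used, for the $p=b=0$ boundary, so that opening sentence should be dropped. Second, the second factor is clearer written as $\prod_{k=0}^{m-1}(1+zq^{n+k})$.

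Compared with the citation, your route buys self-containment: it uses only tools the paper already states, namely \Cref{thm:q-binomial} and standard $q$-factorial manipulations.
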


\subsection{Plethysm and Macdonald polynomials}

We will make extensive use of the \emph{plethystic notation} (cf.\ \cite[Chapter~1]{Haglund2008Book}): for a symmetric function $f$ and an expression $E$ in the variables $x_1, x_2, \dots$, we denote by $f[E]$ the symmetric function obtained by substituting the power sum $p_k$ with $p_k[E]$, where $p_k[E]$ is obtained by substituting each variable $x_i$ with $x_i^k$ in $E$. For example, if $E = x_1 + x_2$, then $p_2[E] = x_1^2 + x_2^2$.

Using this notation, we can define the symmetric functions $E_{n,k}$ via the expansion \[ e_n \left[ X \frac{1-z}{1-q} \right] = \sum_{k=0}^n \frac{(z;q)_k}{(q;q)_k} E_{n,k}. \] 
When relevant, we will use the convention $E_{0,0} = 1$ and $E_{n,k} = 0$ unless $1 \leq k \leq n$.

For a partition $\mu \vdash n$, we denote by $\Ht_\mu[X; q,t]$ the \emph{(modified) Macdonald polynomials}, defined as the unique symmetric functions satisfying the triangularity and normalization axioms
		\begin{align*}
			(\text{T1}) & \qquad \Ht_\mu[X(1-q); q,t] = \sum_{\lambda \geq \mu} a_{\lambda \mu}(q,t) s_\lambda[X] \\
			(\text{T2}) & \qquad \Ht_\mu[X(1-t); q,t] = \sum_{\lambda \geq \mu'} b_{\lambda \mu}(q,t) s_\lambda[X] \\
			(\text{N})  & \qquad \< \Ht_\mu[X;q,t], s_{(n)}[X] \> = 1.
		\end{align*}
        
Macdonald polynomials form a basis of the algebra of symmetric functions $\Lambda$, which is a modification of the one introduced by Macdonald \cite{Macdonald1995Book}. It is orthogonal with respect to the \emph{star scalar product} $\langle \cdot, \cdot \rangle_\ast$, defined by \[ \< p_\lambda, p_\mu \>_\ast = (-1)^{|\mu| - \ell(\mu)} \prod_{i=1}^{\ell(\mu)} (1-q^{\mu_i})(1-t^{\mu_i}) z_\mu \delta_{\lambda,\mu}, \] or equivalently, by \[ \< f, g \>_\ast = \< \omega f[MX], g \>, \] where $\omega$ is the standard involution on $\Lambda$ defined by $\omega e_n = h_n$, and $M = (1-q)(1-t)$. We denote by $f^\ast = f[X/M]$ the inverse of the plethystic substitution $f \mapsto f[MX]$.

With respect to the star scalar product, the Macdonald polynomials are orthogonal, and we have \[ \< \Ht_\lambda, \Ht_\mu \>_\ast = w_\mu(q,t) \delta_{\lambda,\mu}, \] where \[ w_\mu(q,t) = \prod_{c \in \mu} (q^{a_\mu(c)} - t^{l_\mu(c)+1})(t^{l_\mu(c)} - q^{a_\mu(c)+1}). \]

Next, recall the definition of the Pieri coefficients $d^{f}_{\mu,\nu}$: for any symmetric function $f \in \Lambda$, \[ f \widetilde{H}_{\nu} = \sum_{\mu} d^{f}_{\mu,\nu} \widetilde{H}_\mu. \]

Finally, recall the Cauchy--Macdonald identity, which follows from the classical Cauchy identity applied to the Macdonald basis: \[ e_n\left[\frac{XY}{M}\right] = \sum_{\mu \vdash n} \frac{\Ht_\mu[X] \Ht_\mu[Y]}{w_\mu}. \]

\subsection{Macdonald eigenoperators}

If we identify the partition $\mu$ with its Young diagram, i.e.\ with the collection of cells $\{(i,j)\mid 1\leq i\leq \mu_j, 1\leq j\leq \ell(\mu)\}$, then for each cell $c\in \mu$ we define the \emph{arm}, \emph{leg}, \emph{co-arm}, and \emph{co-leg} (denoted, respectively, by $a_\mu(c), l_\mu(c), a_\mu'(c), l_\mu'(c)$) to be the numbers of cells in $\mu$ that are strictly to the right, below, to the left, and above $c$ in $\mu$, respectively (see \Cref{fig:notation}).

\begin{figure}[htbp]
    \centering
    \begin{tikzpicture}[scale=0.4]
        \draw[gray,opacity=.6](0,0) grid (15,10);
        \fill[white] (1,-0.1)|-(3,1) |- (6,3) |- (9,7) |- (15.1,8) |- (1,-.1);
        \fill[blue, opacity=.15] (0,7) rectangle (9,8) (3,3) rectangle (4,10);
        \fill[blue, opacity=.5] (3,7) rectangle (4,8);
        \draw (6,7.5) node {\tiny{Arm}} (3.5,5) node[rotate=90] {\tiny{Leg}} (3.5, 9) node[rotate = 90] {\tiny{Co-leg}} (1.5,7.5) node {\tiny{Co-arm}} ;
    \end{tikzpicture}
    \caption{Arm, leg, co-arm, and co-leg of a cell of a partition.}
    \label{fig:notation}
\end{figure}

For every partition $\mu$, we define the following constants:

\[ B_{\mu} \coloneqq B_{\mu}(q,t) = \sum_{c \in \mu} q^{a_{\mu}'(c)} t^{l_{\mu}'(c)}, \qquad
    \Pi_{\mu} \coloneqq \Pi_{\mu}(q,t) = \prod_{c \in \mu / (1)} (1-q^{a_{\mu}'(c)} t^{l_{\mu}'(c)}). \]

Notice that in the definition of $\Pi_\mu$, the exponents $a_{\mu}'$ and $l_{\mu}'$ are evaluated with respect to the shape $\mu$, not $\mu / (1)$; the latter notation only indicates that the cell $(1,1)$ should be omitted from the product (otherwise the product would be $0$).

We recall the definitions of the following linear operators on $\Lambda$.

\begin{definition}[{\cite[Equation~(3.11)]{BergeronGarsia1999ScienceFiction}}]
    \label{def:nabla}
    We define the linear operator $\nabla \colon \Lambda \rightarrow \Lambda$ on the eigenbasis of Macdonald polynomials as \[ \nabla \Ht_\mu = e_{\lvert \mu \rvert}[B_\mu] \Ht_\mu. \]
\end{definition}

\begin{definition}
    \label{def:pi}
    We define the linear operator $\mathbf{\Pi} \colon \Lambda \rightarrow \Lambda$ on the eigenbasis of Macdonald polynomials as \[ \mathbf{\Pi} \Ht_\mu = \Pi_\mu \Ht_\mu \] where we conventionally set $\Pi_{\varnothing} \coloneqq 1$.
\end{definition}

\begin{definition}
    \label{def:delta}
    For $f \in \Lambda$, we define the linear operators $\Delta_f, \Delta'_f \colon \Lambda \rightarrow \Lambda$ on the eigenbasis of Macdonald polynomials as \[ \Delta_f \Ht_\mu = f[B_\mu] \Ht_\mu, \qquad \qquad \Delta'_f \Ht_\mu = f[B_\mu-1] \Ht_\mu. \]
\end{definition}

Observe that, on the vector space of homogeneous symmetric functions of degree $n$, denoted by $\Lambda^{(n)}$, the operator $\nabla$ equals $\Delta_{e_n}$. Also note that \[ \mathbf{\Pi} = \sum_{n=0}^\infty (-1)^n \Delta'_{e_n} \] on symmetric functions with no constant term.

\begin{definition}[{\cite[Equation~(28)]{DAdderioIraciVandenWyngaerd2021ThetaOperators}}]
    \label{def:theta}
    For any symmetric function $f \in \Lambda^{(n)}$, we define the \emph{Theta operators} on $\Lambda$ as follows: for every $F \in \Lambda^{(m)}$ we set
    \begin{equation*}
        \Theta_f F \coloneqq
        \left\{\begin{array}{ll}
            0 & \text{if } n \geq 1 \text{ and } m=0 \\
            f \cdot F & \text{if } n=0 \text{ and } m=0 \\
            \mathbf{\Pi} f \left[\frac{X}{M}\right] \mathbf{\Pi}^{-1} F & \text{otherwise}
        \end{array}
        \right. ,
    \end{equation*}
    and we extend the definition by linearity to all $f, F \in \Lambda$.
\end{definition}

\section{The combinatorics of the Theta conjecture}
\label{sec:combinatorics}

We begin by giving precise definitions for the combinatorics of the Delta and Theta conjectures.

\begin{definition}
    A \emph{Dyck path} of size $n$ is a lattice path starting at $(0,0)$, ending at $(n,n)$, using only unit up (vertical) steps and right (horizontal) steps, and staying weakly above the line $x=y$. A \emph{labeled Dyck path} is a Dyck path $\pi$ together with a labeling $w \colon [n] \to \mathbb{Z}_+$ of its vertical steps, such that consecutive vertical steps have strictly increasing labels (from bottom to top). We will draw the labels of the vertical steps in the square immediately to the right of each step.

    A \emph{rise} of a labeled Dyck path is a vertical step that is preceded by another vertical step.

    A \emph{valley} of a labeled Dyck path is a vertical step $v$ preceded by at least one horizontal step. A valley $v$ is \emph{contractible} if it is preceded by either two horizontal steps or a horizontal step that is itself preceded by a vertical step whose label is strictly smaller than $v$'s label.

    A \emph{decorated labeled Dyck path} $\pi$ is a labeled Dyck path together with a choice of rises and contractible valleys to be \emph{decorated}.
    We set
    \begin{align*}
         & \dr(\pi) = \{i \in [n] \mid \text{the $i\th$ vertical step of $\pi$ is a decorated rise}\}     \\
         & \dv(\pi) = \{i \in [n] \mid \text{the $i\th$ vertical step of $\pi$ is a decorated valley}\}.
    \end{align*}
    We decorate rises with a $\ast$ and valleys with a $\bullet$, and these decorations are displayed in the square to the left of the vertical step. The set of decorated labeled Dyck paths of size $n$ with $k$ decorated rises and $l$ decorated valleys is denoted by $\LD(n)^{\ast k, \bullet l}$.
\end{definition}

See \Cref{fig:path-example} for an example of an element of $\LD(8)^{\ast 2, \bullet 2}$.

\begin{figure}[htbp]
    \centering
    \begin{tikzpicture}[scale=.72]
        \draw[step=1.0, gray!60, thin] (0,0) grid (8,8);
        \draw[gray!60, thin] (0,0) -- (8,8);
        \draw[blue!60, line width = 1.6 pt] (0,0) -- (0,1) -- (0,2) -- (1,2) -- (1,3) -- (2,3) -- (3,3) -- (3,4) -- (3,5) -- (3,6) -- (4,6) -- (5,6) -- (5,7) -- (6,7) -- (6,8) -- (7,8) -- (8,8);
        \node at (0.5,0.5) {$2$};
        \draw (0.5,0.5) circle (.4cm);
        \node at (0.5,1.5) {$3$};
        \draw (0.5,1.5) circle (.4cm);
        \node at (1.5,2.5) {$4$};
        \draw (1.5,2.5) circle (.4cm);
        \node at (3.5,3.5) {$1$};
        \draw (3.5,3.5) circle (.4cm);
        \node at (3.5,4.5) {$2$};
        \draw (3.5,4.5) circle (.4cm);
        \node at (3.5,5.5) {$4$};
        \draw (3.5,5.5) circle (.4cm);
        \node at (5.5,6.5) {$3$};
        \draw (5.5,6.5) circle (.4cm);
        \node at (6.5,7.5) {$2$};
        \draw (6.5,7.5) circle (.4cm);
        \node at (1-1-0.5,1+0.5) {$\ast$};
        \node at (6-3-0.5,5+0.5) {$\ast$};
        \node at (2-1-0.5,2+0.5) {$\bullet$};
        \node at (7-2-0.5,6+0.5) {$\bullet$};
    \end{tikzpicture}
    \caption{An element of $\LD(8)^{\ast 2, \bullet 2}$.}
    \label{fig:path-example}
\end{figure}

\begin{definition}[Monomial]
    \label{def:monomial}
    For a decorated labeled Dyck path $\pi$, we define the monomial $x^\pi = \prod_{i=1}^n x_{w_i}$, where $w_i$ is the label of the $i\th$ vertical step of $\pi$.
\end{definition}

\begin{definition}[Area]
    \label{def:area}
    Given a decorated labeled Dyck path $\pi$ of size $n$, its \emph{area word} is the word of nonnegative integers whose $i\th$ letter equals the number of unit squares between the $i\th$ vertical step of the path and the line $x=y$. If $a$ is the area word of $\pi$, the \emph{area} of $\pi$ is \[\area(\pi) \coloneqq \sum_{i \in [n] \setminus \dr(\pi)} a_i.\]
\end{definition}

For example, the decorated labeled Dyck path in \Cref{fig:path-example} has area $4$.

\begin{remark}[{\cite[Remark~1.19]{DAdderioIraciVandenWyngaerd2022TheBible}}]
	\label{rmk:rises-falls-correspondence}
	Note that, if we call a horizontal step followed by another horizontal step a \emph{fall}, then there is a natural bijection between rises and falls.
    Indeed, the joining point of the two vertical steps of a rise is a point where a path $\pi$ vertically crosses a certain diagonal parallel to the main diagonal.
    Since the path must end at the main diagonal, it must cross the same diagonal horizontally at least once, via a fall. We map the rise to the first such fall; this yields a bijection (see \Cref{fig:rises-falls-correspondence}).
    Therefore, we might equivalently decorate falls instead of the corresponding rises.
\end{remark}

\begin{figure}[htbp]
	\centering
	\begin{tikzpicture}[scale=0.6]
		\draw[gray!60, thin] (0,0) grid (11,11) (0,0) -- (11,11) (-1,1) -- (10,12);
		\draw[blue!60, ultra thick] (0,0) -- (0,2) -- (1,2) -- (1,3) -- (2,3) -- (2,6) -- (4,6) -- (4,9) -- (5,9) -- (5,10) -- (7,10) -- (7,11) -- (11,11);
		\draw[ultra thick] (2,3) -- (2,5) (8,11) -- (10,11);
		\fill[pattern=north west lines, pattern color=gray] (2,4) rectangle (4,5) (8,11) rectangle (9,9);
		\draw  (1.5,4.5) node {$\ast$};
	\end{tikzpicture} 
	\caption{Correspondence between rises and falls.}
	\label{fig:rises-falls-correspondence}
\end{figure}

Before defining the $\dinv$ statistic, which is the main novelty of this paper, we need to introduce some additional notation.

\begin{definition}
    A decorated valley of a decorated labeled Dyck path $\pi$ is a \emph{star valley} if it is preceded by two horizontal steps, and the first of these is a decorated fall, via the bijection described in \Cref{rmk:rises-falls-correspondence}. We denote the set of star valleys of $\pi$ by $\dv^\ast(\pi)$.
\end{definition}

We write $i \stackrel{\ast}{\rightarrow} j$, and say that $i$ \emph{star-attacks} $j$, if $i < j$, $a_i = a_j + 1$, $i \in \dr$, and $j \in \dv$.

\begin{definition}[Dinv]
    \label{def:dinv}
    Given a decorated labeled Dyck path $\pi$ of size $n$, its \emph{diagonal inversions} are the pairs of indices $i < j$ such that:
    \begin{enumerate}
        \item $i \not \in \dv$, $a_i = a_j$, and $w_i < w_j$; or \label{enum:dinv_1}
        \item $i \not \in \dv$, $a_i = a_j + 1$, and $w_i > w_j$; or \label{enum:dinv_2}
        \item $i \in \dv^\ast$, $a_i = a_j$, $j \in \dv$, and $w_i < w_j$; or \label{enum:dinv_3}
        \item $i \in \dv^\ast$, $a_i = a_j$, $j \not \in \dv$, and $w_i > w_j$. \label{enum:dinv_4}
    \end{enumerate}

    The \emph{dinv} of $\pi$ is the number
    \[ \dinv(\pi) = \# \{ i < j \mid (i,j) \text{ is a diagonal inversion of } \pi \} - \# (\dv \setminus \dv^\ast) - \# \{ i < j \mid i \stackrel{\ast}{\rightarrow} j \}. \]
\end{definition}

The first two terms are classical, while the last two terms are new. Notice that, for $j \in \dv^\ast$, the set $\{ i \mid i \stackrel{\ast}{\rightarrow} j \}$ is nonempty, and it contains the index of the decorated rise corresponding to the decorated fall preceding $j$. The discovery of this statistic has been assisted by Aristotle \cite{Achim2025Aristotle}.

The decorated labeled Dyck path in \Cref{fig:path-example} has dinv $4$: the pairs $(2,3), (5,7)$ contribute to the first term, $(2,4), (6,7), (6,8)$ contribute to the second term, none contribute to the third term, and $(7,8)$ contributes to the fourth term; then we have one non-star valley at index $3$, and $6 \stackrel{\ast}{\rightarrow} 7$.

\begin{proposition}
    The dinv statistic is nonnegative.
\end{proposition}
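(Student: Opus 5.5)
The plan is to prove the statement by a charging argument. Every negative contribution to $\dinv(\pi)$ is attached to some decorated valley $j \in \dv$: either $j \in \dv \setminus \dv^\ast$ contributes $-1$, or some $i$ with $i \stackrel{\ast}{\rightarrow} j$ contributes $-1$. It therefore suffices to show, for each fixed $j \in \dv$, that
\[ \#\{ i < j \mid (i,j) \text{ is a diagonal inversion} \} \;\geq\; [\, j \notin \dv^\ast \,] + \#\{ i \mid i \stackrel{\ast}{\rightarrow} j \}. \]
Summing this over $j \in \dv$ gives the claim, since diagonal inversions $(i,j)$ with $j \notin \dv$ only help. Throughout I use the basic fact about area words: if vertical step $i$ is followed by $h$ horizontal steps and then vertical step $i'$, then $a_{i'} = a_i + 1 - h$.

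\emph{Star-attacks.} Fix $j \in \dv$ and $i \in \dr$ with $a_i = a_j + 1$ and $i < j$. Since $i$ is a rise, $i \notin \dv$.
\begin{itemize}
\item If $w_i > w_j$, then $(i,j)$ is an inversion of type~\eqref{enum:dinv_2}, and I charge the star-attack to it.
\item If $w_i < w_j$, then $i-1$ is a vertical step with $a_{i-1} = a_j$ and $w_{i-1} < w_i < w_j$. I charge to the pair $(i-1,j)$: it is of type~\eqref{enum:dinv_1} if $i-1 \notin \dv$. If $i-1 \in \dv^\ast$, then $j \in \dv$ and $w_{i-1}<w_j$ make it type~\eqref{enum:dinv_3}. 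It remains to rule out or separately handle $i-1 \in \dv \setminus \dv^\ast$.
\end{itemize}
Distinct rises $i$ give distinct charged indices $i$ or $i-1$, because two rises at the same level are not adjacent. So this charging is injective.

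\emph{The extra unit for a non-star valley.} For $j \in \dv \setminus \dv^\ast$ I must find one further inversion $(i,j)$ not already used. Suppose first that $j$ is preceded by exactly one horizontal step, which is preceded by vertical step $j-1$ with $w_{j-1} < w_j$. Then $a_{j-1} = a_j$, so $(j-1,j)$ is an inversion of type~\eqref{enum:dinv_1} if $j-1 \notin \dv$. If $j-1 \in \dv$ instead, I would walk further back through the region of levels $a_j$ and $a_j+1$. Otherwise $j$ is preceded by two horizontal steps, and I use the classical argument from the valley Delta conjecture. Scan backwards from $j$ to the last vertical step $i$ with $a_i = a_j + 1$ that is followed by a horizontal step. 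Among the vertical steps between $i$ and $j$ at levels $a_j, a_j+1$, a monotonicity argument on labels forces either some $a_{i'} = a_j+1$ with $w_{i'} > w_j$, or some $a_{i'} = a_j$ with $w_{i'} < w_j$.

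Non-star here means that the first horizontal step before $j$ is not a decorated fall. This is exactly what should guarantee that the step found this way is not a decorated rise, and hence not already consumed by a star-attack. That disjointness is the reason for excluding $\dv^\ast$ from the $-\#\dv$ term.

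\emph{Main obstacle.} The main obstacle is the interaction with other decorated valleys. Types~\eqref{enum:dinv_1} and~\eqref{enum:dinv_2} require $i \notin \dv$, so whenever the candidate $i$ is itself a decorated valley, the pair may fail to count. Types~\eqref{enum:dinv_3} and~\eqref{enum:dinv_4} cover this only when $i \in \dv^\ast$. To handle a non-star decorated $i$, I would first try an induction on $j$, from left to right. Using the contractibility of $i$, one replaces $i$ by the step that witnessed the charge for $i$ one level down. I expect this step to require checking a handful of local configurations (rise/valley/fall patterns around levels $a_j, a_j+1$). If the injectivity breaks there, I would fall back on a different bookkeeping: pair each decorated valley with the maximal run of vertical steps ending just before it.
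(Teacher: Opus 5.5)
\textbf{Gap: the case where the candidate partner is a non-star decorated valley is left open, and that case is the whole difficulty.} Your reduction to the local inequality $\#\{i<j \mid (i,j)\text{ is a diagonal inversion}\}\geq \delta_{j\notin\dv^\ast}+\#\{i\mid i\stackrel{\ast}{\rightarrow} j\}$ for each $j\in\dv$ matches the paper. So does your first-level charging: the pair $(k,j)$ of type~\eqref{enum:dinv_2} if $w_k>w_j$, and otherwise $(k-1,j)$. The second case should read $w_k\le w_j$, since labels may repeat.

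The proposal stops where the argument starts. Let $s$ be the candidate partner at level $a_j$, i.e.\ $k-1$, $j-1$, or $r-1$. When $s\in\dv\setminus\dv^\ast$, the pair $(s,j)$ is of none of the four types. You only say you would ``walk further back'' or try an induction.

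The missing idea is a well-defined backward search that uses both contractibility and the non-star hypothesis:
\begin{itemize}
\item If $a_{s-1}=a_s$, contractibility gives $w_{s-1}<w_s<w_j$, and you move to $s-1$.
\item Otherwise $s$ is preceded by a fall. Its matched rise $r$ has $a_r=a_j+1$ and is \emph{undecorated precisely because $s$ is not a star valley}. Stop at $(r,j)$, of type~\eqref{enum:dinv_2}, if $w_r>w_j$. Otherwise move to $r-1$, using $w_{r-1}<w_r\le w_j$.
\end{itemize}
The search terminates because the first step at level $a_j$ is not a valley.

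Your sketch for the extra unit when $j$ follows two horizontal steps does not supply this. Scanning for the last vertical step at level $a_j+1$ followed by a horizontal step targets the wrong object. For the path with three vertical steps, three horizontal steps, one vertical step and one horizontal step, and $j=4$, no such step exists. The relevant step is the rise matched with the fall preceding $j$.

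\textbf{Second gap: injectivity fails once the search is needed.} Your argument that distinct rises give distinct indices $k$ or $k-1$ no longer applies. Your disjointness claim for the extra unit of a non-star $j$ only excludes landing on a decorated rise. It does not exclude landing, after walking back, on a level-$a_j$ step already charged to some star-attack.

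The paper closes this by confinement:
\begin{itemize}
\item List the decorated rises at level $a_j+1$ before $j$ as $k_1<\cdots<k_m$.
\item The search only jumps across rise--fall pairs whose rise is undecorated. So to pass to the left of $k_{h-1}$, it would have to reach the valley reached through the fall matched with $k_{h-1}$.
\item That valley is a stopping point: it is either undecorated, or a star valley because its matched rise $k_{h-1}$ is decorated.
\item Hence all charges for $k_h$ lie in $(k_{h-1},k_h]$.
\item The search for $j$ stays strictly to the right of $k_m$, because $j\notin\dv^\ast$ means the fall preceding $j$ is not the one matched with $k_m$.
\end{itemize}
Without this confinement, or an equivalent bookkeeping, your charging is not shown to be injective, and the inequality is not yet proved.
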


\begin{proof}
    Let $j \in \dv$. We prove the stronger statement
    \begin{equation}
        \label{eq:local-nonnegativity}
        \#\{ i < j \mid (i,j) \text{ is a diagonal inversion } \} \geq \# \{k \mid k \stackrel{\ast}{\rightarrow} j \}
            + \delta_{j \notin \dv^\ast}.
    \end{equation}

    Let $s < j$ be an index such that $a_s = a_j$ and $w_s < w_j$; if $s \notin (\dv \setminus \dv^\ast)$ then $(s,j)$ is an inversion of type~\eqref{enum:dinv_1} if $s \notin \dv$ and of type~\eqref{enum:dinv_3} if $s \in \dv^\ast$. Likewise, a rise $r < j$ with $a_r = a_j + 1$ and $w_r > w_j$ gives an inversion of type~\eqref{enum:dinv_2}.

    We want to associate each star-attack relation $k \stackrel{\ast}{\rightarrow} j$ with a distinct inversion $(s,j)$ of type~\eqref{enum:dinv_1}, \eqref{enum:dinv_2}, or \eqref{enum:dinv_3}, and, if $j \notin \dv^\ast$, we want to associate $j$ with another distinct inversion $(s,j)$ of the same types.

    We define a backward search that, given any index $s < j$ such that $a_s = a_j$ and $w_s < w_j$, finds another index $s'< j$ with the same properties and $s' \notin (\dv \setminus \dv^\ast)$, or a rise $r < j$ with $a_r = a_j + 1$ and $w_r > w_j$. We proceed as follows.
    \begin{itemize}
        \item If $s \notin (\dv \setminus \dv^\ast)$, stop at $s$.
        \item If $s \in \dv \setminus \dv^\ast$, then $s$ is a decorated valley that is not a star valley. 
        \begin{itemize}
            \item If $a_{s-1} = a_s$, then $w_{s-1} < w_s < w_j$ by the contractibility condition. Replace $s$ by $s-1$ and continue.
            \item If $a_{s-1} > a_s$, then $s$ is preceded by two horizontal steps, that is, a fall. Let $r$ be the rise matched with that fall. Since $s$ is not a star valley, $r$ is not decorated.
            \begin{itemize}
                \item If $w_r > w_j$, stop at $r$.
                \item If $w_r \leq w_j$, then $w_{r-1} < w_r \leq w_j$ as $r$ is a rise. Replace $s$ by $r-1$ and continue.
            \end{itemize}
        \end{itemize}
    \end{itemize}
    Since every step of the search moves strictly to the left, it must terminate, ultimately at the first step on diagonal $a_j$ if it has not stopped earlier. That first step is not a valley and is not in $(\dv \setminus \dv^\ast)$. The output of the search determines one of the inversions described above.

    For each star-attack relation $k \stackrel{\ast}{\rightarrow} j$, if $w_k > w_j$, we assign the inversion $(k,j)$ of type~\eqref{enum:dinv_2}. If $w_k \leq w_j$, then $w_{k-1} < w_k \leq w_j$, so we can start the backward search at $s = k-1$; we assign the resulting inversion to the star-attack relation $k \stackrel{\ast}{\rightarrow} j$.
    
    If $j \notin \dv^\ast$, we need to find another distinct inversion. When $a_{j-1} = a_j$, we start its search at $j-1$, using $w_{j-1} < w_j$.  When $a_{j-1} > a_j$, since $j$ is not a star valley, it is preceded by a fall whose matched rise $r$ is not decorated; we assign $(r,j)$ if $w_r > w_j$, and otherwise we start the search at $r-1$.
    
    After ordering the attacking decorated rises as $k_1<\cdots<k_m$, let $v_h$ be the valley reached by the fall matched with $k_h$. Before the search for $k_h$ could pass to the left of $k_{h-1}$, it must reach $v_{h-1}$. This index is a stopping point: if it is not decorated it is outside $\dv$, while if it is decorated it is a star valley because its matched rise $k_{h-1}$ is decorated. Therefore the search for $k_h$ stays strictly to the right of $k_{h-1}$, whereas the search for $k_{h-1}$ stays to its left. The search for $j$ similarly stops at or after $v_m$; equality $v_m=j$ would make $j$ a star valley, a contradiction. This proves that all assigned inversions are distinct, and the thesis now follows.
\end{proof}

We recall the statement of the Delta conjecture, which was first proposed in \cite[Conjecture~1.1]{HaglundRemmelWilson2018DeltaConjecture}.

\begin{conjecture}
    \label{conj:delta}
    For $n, k \geq 0$, we have
    \[ \Delta'_{e_{n-k-1}} e_{n} = \sum_{\pi \in \LD(n)^{\ast k, \bullet 0}} q^{\dinv(\pi)} t^{\area(\pi)} x^\pi = \sum_{\pi \in \LD(n)^{\ast 0, \bullet k}} q^{\dinv(\pi)} t^{\area(\pi)} x^\pi. \]
\end{conjecture}

The first equality is the rise version (now a theorem, first proved in \cite{DAdderioMellit2022CompositionalDelta} and then independently in \cite{BHMPS2023ProofExtendedDelta}), while the equality with the second sum is known as the valley version (still open).

Recall that $\Delta'_{e_{n-k-1}} e_n = \Theta_{e_k} \nabla e_{n-k}$ \cite[Theorem~3.1]{DAdderioIraciVandenWyngaerd2021ThetaOperators}, so the LHS of the Delta conjecture can be rewritten as $\Theta_{e_k} \nabla e_{n-k}$.

We can now state the \emph{Theta conjecture}, extending the original Theta conjecture \cite[Conjecture~9.1]{DAdderioIraciVandenWyngaerd2021ThetaOperators} to include a $q$-statistic.

\begin{conjecture}[Theta conjecture]
    \label{conj:theta}
    For $n, k, l \geq 0$, we have
    \[ \Theta_{e_l} \Theta_{e_k} \nabla e_{n-k-l} = \sum_{\pi \in \LD(n)^{\ast k, \bullet l}} q^{\dinv(\pi)} t^{\area(\pi)} x^\pi. \]
\end{conjecture}

This conjecture unifies the rise and valley versions of the Delta conjecture \cite[Conjecture~1.1]{HaglundRemmelWilson2018DeltaConjecture}, which are obtained by setting $l=0$ and $k=0$, respectively. It has been tested for $n \leq 10$ using SageMath \cite{SageMath}.

We also have a touching refinement, but the naïve compositional refinement does not match the expected symmetric function side (cf. \cites{HaglundMorseZabrocki2012CompositionalShuffleConjecture,DAdderioIraciVandenWyngaerd2021ThetaOperators}).

\begin{definition}[Touch number]
    The \emph{touch number} of a path $\pi \in \LD(n)^{\ast k, \bullet l}$ is the number of nondecorated vertical steps that touch the main diagonal. We denote it by $\touch(\pi)$, and we let $\LD(n,r)^{\ast k, \bullet l}$ be the subset of $\LD(n)^{\ast k, \bullet l}$ of paths with touch number $r$.
\end{definition}

Notice that $1 \leq \touch(\pi) \leq n-k-l$. For example, the path in \Cref{fig:path-example} has touch number $2$.

\begin{conjecture}[Theta conjecture, touching refinement]
    \label{conj:theta-touch}
    For $n, k, l, r \geq 0$, we have
    \[ \Theta_{e_l} \Theta_{e_k} \nabla E_{n-k-l,r} = \sum_{\pi \in \LD(n,r)^{\ast k, \bullet l}} q^{\dinv(\pi)} t^{\area(\pi)} x^\pi. \]    
\end{conjecture}

The special case when $q=1$ is a touching refinement of the original Theta conjecture, proved in \cite[Theorem~6.21]{DIIP2026LeavingTheHall}.

\begin{remark}
    \label{rmk:reading-order}
    Usually, the reading word of a decorated labeled Dyck path $\pi$ with respect to the \emph{dinv} statistic is the word obtained by reading the labels of the vertical steps from the bottom-most diagonal to the top-most diagonal, and from left to right within each diagonal.
    
    For these objects, in order to obtain the maximum number of diagonal inversions when the reading word is the identity, we need to slightly modify this order: we still read the labels from the bottom-most diagonal to the top-most diagonal, but we read the labels of the decorated valleys after the labels of the other steps. In other words, set
    \[ i \prec j \iff (a_i, \delta_{i\in\dv}, i) <_{\mathrm{lex}} (a_j, \delta_{j\in\dv}, j). \]
\end{remark}

\begin{definition}[Standardization]
    \label{def:standardization}
    For $\pi \in \LD(n,r)^{\ast k, \bullet l}$, we define its \emph{standardization} $\mathsf{std}(\pi)$ to be the decorated labeled Dyck path obtained by replacing the smallest label of $\pi$ with $1$, the second smallest with $2$, and so on, breaking ties by reverse $\prec$-order (meaning that if $w_i = w_j$ and $i \prec j$, $i$ gets the larger label).
    
    We say that $\pi$ is \emph{standard} if its set of labels is exactly $[n]$. We denote by $\LD(n)^{\ast k, \bullet l}_{\mathrm{std}}$ the subset of standard decorated labeled Dyck paths in $\LD(n)^{\ast k, \bullet l}$.
\end{definition}

\begin{lemma}
    \label{lem:standardization}
    Standardizing preserves column strictness, contractibility of decorated valleys, and $\dinv$.
\end{lemma}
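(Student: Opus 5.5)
Standardization replaces the labeling $w$ by a labeling $w'$ with values in $[n]$. It has two key features. It is order-preserving where $w$ is strict: $w_i<w_j$ implies $w'_i<w'_j$. It breaks ties in a fixed way: if $w_i=w_j$ and $i\prec j$, then $w'_i>w'_j$. The path, the area word, and the sets $\dr$, $\dv$, $\dv^\ast$ are unchanged. In particular, $\dv^\ast$ depends only on the underlying path and on which rises are decorated, via the bijection of \Cref{rmk:rises-falls-correspondence}. Hence $\area$ and the correction terms $\#(\dv\setminus\dv^\ast)$ and $\#\{i<j \mid i\stackrel{\ast}{\rightarrow}j\}$ are unchanged, since star-attacks depend only on $a$, $\dr$, $\dv$. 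So we only need to check three things: column strictness, contractibility, and that the set of diagonal inversions of types \eqref{enum:dinv_1}--\eqref{enum:dinv_4} is preserved.

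Column strictness and contractibility involve only strict inequalities $w_{i}<w_{j}$ between specific pairs of steps. These are preserved because the order is strictly preserved. Concretely, consecutive vertical steps satisfy $w_{i-1}<w_i$, so $w'_{i-1}<w'_i$. For a contractible valley $v$ preceded by a single horizontal step, the previous vertical step $u$ satisfies $w_u<w_v$, so $w'_u<w'_v$. A valley preceded by two horizontal steps remains contractible trivially.

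For $\dinv$, pairs $(i,j)$ with $w_i\neq w_j$ keep their relative order, so their inversion status is unchanged. The substance is pairs with $w_i=w_j$, $i<j$. Originally such a pair is never an inversion, because every condition requires a strict inequality. We must show it still is not one after standardization. If $a_i=a_j+1$, then $j\prec i$, so $w'_j>w'_i$. Type \eqref{enum:dinv_2} requires $w'_i>w'_j$, so it fails. If $a_i=a_j$, we split into cases according to decoration.

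\begin{itemize}
\item Case $i,j\notin\dv$, or $i,j\in\dv$: then $i\prec j$, so $w'_i>w'_j$. Type \eqref{enum:dinv_1} needs $w'_i<w'_j$ and fails. When $i,j\in\dv$, type \eqref{enum:dinv_3} also needs $w'_i<w'_j$ and fails. Types \eqref{enum:dinv_2} and \eqref{enum:dinv_4} do not apply to these pairs: \eqref{enum:dinv_2} needs $a_i=a_j+1$, and \eqref{enum:dinv_4} needs $i\in\dv^\ast$ and $j\notin\dv$.
\item Case $i\notin\dv$, $j\in\dv$: again $i\prec j$, so $w'_i>w'_j$, and type \eqref{enum:dinv_1} fails.
\item Case $i\in\dv$, $j\notin\dv$: now $j\prec i$, so $w'_i<w'_j$. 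The only relevant condition is type \eqref{enum:dinv_4}, which requires $w'_i>w'_j$, so it fails. Type \eqref{enum:dinv_1} requires $i\notin\dv$ and does not apply.
\end{itemize}

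Hence no tied pair becomes an inversion, and $\dinv$ is preserved.

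The main obstacle is making sure the $\prec$ tie-breaking convention is consistent with every inversion type simultaneously. This matters especially for type \eqref{enum:dinv_4}, which compares in the reverse direction, and it is exactly why \Cref{rmk:reading-order} places decorated valleys after the other steps on the same diagonal. I would organize the proof as a case table over the pairs $(\delta_{i\in\dv},\delta_{j\in\dv})$ and the two diagonal relations $a_i=a_j$ and $a_i=a_j+1$, checking each entry as above.
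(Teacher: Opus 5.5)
Your proposal is correct and takes the same approach as the paper. Unequal labels keep their order, which preserves column strictness, contractibility, and all non-tied inversions, while the correction terms depend only on the path and decorations. For tied labels, the paper compresses your case table into one observation: each of the four inversion types has a label condition of the form $w_u<w_v$ with $u\prec v$, and tied labels become decreasing in $\prec$-order, so no new inversion is created.
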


\begin{proof}
    For each of the four types of diagonal inversion, its label condition can be written $w_u < w_v$ with $u \prec v$.
    In particular, unequal labels retain their relative order under standardization, and equal labels become decreasing in $\prec$-order, and therefore create no new diagonal inversion. Star-valley status and the correction terms depend
    only on the path and its decorations.

    The strict inequalities required in a column, or at a decorated valley preceded by exactly one horizontal step, also run forward in $\prec$. They are preserved because their labels are unequal. The remaining contractibility conditions depend only on the path.
\end{proof}

Using \Cref{lem:standardization}, we can rewrite the Theta conjecture as a sum over standard decorated labeled Dyck paths associated with a fundamental quasisymmetric function.

\begin{conjecture}[Theta conjecture, standard version]
    \label{conj:theta-standard}
    For $n, k, l \geq 0$, we have
    \[ \Theta_{e_l} \Theta_{e_k} \nabla e_{n-k-l} = \sum_{\pi \in \LD(n)^{\ast k, \bullet l}_{\mathrm{std}}} q^{\dinv(\pi)} t^{\area(\pi)} F_{\mathsf{ides}(\sigma(\pi))}, \]
    where $\sigma(\pi)$ is the permutation obtained by reading the labels of $\mathsf{std}(\pi)$ in reverse $\prec$-order, $\mathsf{ides}(\sigma)$ is the descent set of $\sigma^{-1}$, and $F$ is Gessel's fundamental quasisymmetric function, where the correspondence between subsets of $[n-1]$ and compositions of $n$ can be made explicit by the identification $\mathsf{ides}(\sigma) = \{i \mid \operatorname{pos}(i) < \operatorname{pos}(i+1)\}$.
\end{conjecture}

It is easy to see that Conjecture~\ref{conj:theta-standard} is equivalent to Conjecture~\ref{conj:theta} by grouping the terms of the sum according to their standardization.

\section{The Schröder case}
\label{sec:schroeder}

Classically, the Schröder case of a shuffle-like conjecture is obtained by taking the scalar product with $e_{n-d} h_d$, which fixes the reading word to be a shuffle of $1, \dots, n-d$ (in increasing reading order) and $n, \dots, n-d+1$ (in decreasing reading order). Here, the reading order is the one described in \Cref{rmk:reading-order}. Notice that this procedure does not require the combinatorial generating function to be a symmetric function, as the scalar product with $e_{n-d} h_d$ can be extended to the space of quasisymmetric functions as the linear functional that selects the sets $[n-d-1]$ and $[n-d]$ in the fundamental basis.

In this case, the objects carry no explicit labels, and the selection of the $d$ rows with large labels is equivalent to choosing a subset of marked \emph{peaks}, that is, vertical steps followed by horizontal steps, with the additional condition that a marked peak cannot be immediately followed by a vertical step that is a decorated valley.

Indeed, suppose that a marked peak is immediately followed by a decorated valley. Since the latter is preceded by exactly one horizontal step, its contractibility would require the label of the vertical step of the marked peak to be smaller than the label of the valley. This is impossible in the Schröder specialization: the marked peak has one of the large labels and, by definition, its label is larger than every label that comes after it in the reading order, including that of the decorated valley. Conversely, this is the only additional restriction introduced by forgetting the labels, since a decorated valley preceded by at least two horizontal steps is automatically contractible.

Let us denote by $\LD(n,r)^{\ast k, \bullet l}_{\circ d}$ the set of decorated Dyck paths of size $n$ with $k$ decorated rises, $l$ decorated valleys, touch number $r$, and $d$ marked peaks, with the condition that no marked peak is immediately followed by a decorated valley. We set \[
\mpe(\pi) = \{i \in [n] \mid \text{the $i\th$ vertical step of $\pi$ is in a marked peak}\} .
\]

The definition of $\dinv$, taking into account the restriction on the order of labels, is adjusted accordingly.

\begin{definition}[Dinv]
    \label{def:dinv_mp}
    Given $\pi\in\LD(n,r)^{\ast k, \bullet l}_{\circ d}$ its \emph{diagonal inversions} are the pairs of indices $i < j$ such that:
    \begin{enumerate}
        \item $i \not \in \dv$, $a_i = a_j$, and $i \not \in \mpe$; or \label{enum:ndinv_1}
        \item $i \not \in \dv$, $a_i = a_j + 1$, and $j \not \in \mpe$; or \label{enum:ndinv_2}
        \item $i \in \dv^\ast$, $a_i = a_j$, $j \in \dv$, and $i \not \in \mpe$; or \label{enum:ndinv_3}
        \item $i \in \dv^\ast$, $a_i = a_j$, $j \not \in \dv$, and $j \not \in \mpe$. \label{enum:ndinv_4}
    \end{enumerate}

    The \emph{dinv} of $\pi$ is the number
    \[ \dinv(\pi) = \# \{ i < j \mid (i,j) \text{ is a diagonal inversion of } \pi \} - \# (\dv \setminus \dv^\ast) - \# \{ i < j \mid i \stackrel{\ast}{\rightarrow} j \}. \]
\end{definition}

The goal of this section is to prove the following theorem.

\begin{theorem}
    \label{thm:schroder}
    For $n, k, l, r, d \geq 0$, we have
    \[ \langle \Theta_{e_l} \Theta_{e_k} \nabla E_{n-k-l,r}, e_{n-d} h_d \rangle = \sum_{\pi \in \LD(n,r)^{\ast k, \bullet l}_{\circ d}} q^{\dinv(\pi)} t^{\area(\pi)}, \]
    where as usual, symmetric functions and combinatorial families with parameters outside their natural ranges are understood to be zero.
\end{theorem}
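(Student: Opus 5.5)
We will follow the strategy of the known Schröder cases \cite{DAdderioIraciVandenWyngaerd2019GeneralizedDeltaSchroeder, DAdderioIraci2023}: show that both sides satisfy the same recursion with the same initial conditions. The recursion is obtained by removing the steps of the path touching the main diagonal, which changes the touch number $r$ into a new touch number $s$.

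\textbf{Symmetric function side.} We first compute $\langle \Theta_{e_l}\Theta_{e_k}\nabla E_{n-k-l,r}, e_{n-d}h_d\rangle$ as an explicit recursion in $(n,k,l,r,d)$. To do this we use the known Theta-operator identities from \cite{DAdderioIraciVandenWyngaerd2021ThetaOperators}: the expansion of $h_d^\perp$ acting on $\Theta_{e_k}\nabla E_{m,r}$, and the fact that $\Theta_{e_l}\Theta_{e_k}=\Theta_{e_le_k}$ together with the multiplicativity $\Theta_f\Theta_g=\Theta_{fg}$. We then expand $e_le_k$ and apply the standard recursion for $\nabla E_{m,r}$ in terms of $\nabla E_{m-r,s}$, namely
\[
\nabla E_{m,r}=t^{m-r}q^{\binom r2}\sum_s \qbinom{s+r-1}{s} h_r^\perp \ldots .
\]
Equivalently, we transport the $q=1$ recursion of \cite{DIIP2026LeavingTheHall} and restore the $q$-weights. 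The expected result is that the scalar product equals
\[
\sum_{s}\ t^{n-k-l-r}\,(\text{product of $q$-binomials in } r,s,d,k,l)\cdot(\text{the same quantity for } n-r-\text{decorations},\,s,\dots).
\]
The $q$-binomials are indexed by how many of the $r$ bottom steps are marked peaks, how many decorated rises and valleys sit on the lowest diagonal, and how many of these are attached above the $s$ new touch points.

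\textbf{Combinatorial side.} We remove the bottom diagonal: delete the area-$0$ steps and the decorations living there, and lower everything else by one. This produces a path with touch number $s$ counting the nondecorated area-$1$ steps. The area drops by exactly $n-k-l-r$, because every nondecorated-rise step above the diagonal loses one unit; this matches the $t$-factor. It remains to show that the dinv lost, summed over all ways to reinsert the bottom layer, equals the $q$-binomial product. The bottom layer consists of the following data, each to be interleaved with the $s$ area-$1$ steps:
\begin{itemize}
\item $r$ touching steps, some of them marked peaks;
\item some decorated valleys at height $0$, which are possibly star valleys when preceded by a decorated fall coming down from height $1$;
\item decorated rises from height $0$ to height $1$.
\end{itemize}
Inversions of types~\eqref{enum:ndinv_1} and~\eqref{enum:ndinv_2} between levels $0$ and $1$ give the classical $\qbinom{s+r-1}{s}$-type factors together with the $q^{\binom{\cdot}{2}}$ factors coming from marked peaks. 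The new types~\eqref{enum:ndinv_3} and~\eqref{enum:ndinv_4}, the star-attack correction, and the $-\#(\dv\setminus\dv^\ast)$ correction must then be accounted for in the interactions of decorated rises with decorated valleys.

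\textbf{Main obstacle.} The hard step is this last one. As the paper warns, the new inversion types do not individually produce $q$-binomials. The plan is to group the configurations by the positions of the star valleys relative to the decorated rises feeding them. We sum first over these positions with the other data fixed, and collapse the resulting sums using \Cref{prop:q_sum} and the $q$-binomial \Cref{thm:q-binomial}; these are the two Gaussian product identities. Within each group, the net contribution of a star valley (its type~\eqref{enum:ndinv_3}/\eqref{enum:ndinv_4} inversions minus its star attacks) should telescope like an ordinary valley shifted by one slot. The factor then matches the rise-version and valley-version factors at $l=0$ and $k=0$ respectively. Finally, we check the base cases $n=r$ directly, where there is no area and the paths consist of touching steps with decorations only.
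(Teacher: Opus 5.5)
Your top-level strategy is the paper's: delete the main-diagonal layer, match the resulting recursion against a recursion for the scalar product, and induct on $n$. But neither recursion is actually produced, so the proposal stops where the proof begins.

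\textbf{Symmetric-function side.} Your displayed recursion trails off in $\ldots$. ``Transport the $q=1$ recursion of \cite{DIIP2026LeavingTheHall} and restore the $q$-weights'' is not a derivation, since a $q=1$ specialization does not determine the $q$-weights. The missing idea is a reduction to a family whose recursion is already known. Set $m=n-r-k-l$. From $\nabla E_{m+r,r}=t^m\Theta_{h_m}\Ht_{(r)}$, multiplicativity of the Theta operators, and Haglund's summation formula \cite{Haglund2004Schroeder}, one gets $\langle\Theta_g\Ht_{(r)},e_{n-d}h_d\rangle=(\Delta_g e^\ast_{n-d})[(1-t)(1-q^r)]$. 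The Macdonald--Cauchy and hook-coefficient identities then turn the left-hand side into $t^m\langle\Delta_{h_m}\Delta_{e_l}e_{n-d}[X[r]_q],e_kh_{n-k-d}\rangle$. This is the family of \cite{DAdderioIraciVandenWyngaerd2019GeneralizedDeltaSchroeder}, whose recursion is then iterated twice. The outcome is a sum over $b,p,u,v$, not only over a new touch number, with prefactor $t^{n-r-k-l}\,t^{l-b}$, where $b$ is the number of decorated valleys on the main diagonal.

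\textbf{Errors in your combinatorial bookkeeping.}
\begin{itemize}
\item Your area count misses the factor $t^{l-b}$. Every surviving step that is not a decorated rise drops one diagonal, including the $l-b$ decorated valleys off the main diagonal. So the area drops by $n-r-b-k$, not $n-k-l-r$; in the paper's size-$18$ example the drop is $5$, while $n-k-l-r=3$.
\item Only the $p+b$ main-diagonal steps that are not marked peaks can receive subpaths, so the classical factor is $\qbinom{p+b+u-1}{u}$ rather than one in $r$ and $s$.
\item The new touch number is $u+v$, because the $v$ decorated rises on the first diagonal lose their decoration in $\bar\pi$.
\end{itemize}

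\textbf{The main obstacle.} The step you flag as the main obstacle is the entire content of the combinatorial argument, and your plan for it would not work as stated. \Cref{prop:q_sum} and the $q$-binomial theorem cannot collapse the star-valley sums. The former only rewrites the target recursion as a sum over $h$, the number of main-diagonal decorated valleys that are also marked peaks. The latter only extracts a coefficient at the very end.

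There is also no local telescoping of each star valley. \Cref{ex:no-factorization} shows that, with all parameters fixed, the deleted-step word $NDN$ contributes $q(q^2+1)$ while $NND$ contributes $2q^2$. So the $N$--$V$ and $N$--$D$ inversions cannot be separated from the restored-rise contributions word by word.

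What is needed is the following sequence of steps.
\begin{itemize}
\item Encode the deleted steps as a word $P^{b_0}NV^{c_1}D^{\epsilon_1}\cdots NV^{c_s}D^{\epsilon_s}P^{b_s}$.
\item Note that at most one rise decoration can be restored per insertion slot, the slots being the $N$ and $V$ letters.
\item Compute the exponent of each restored decorated rise. It is $(1+\epsilon_i)(s-i)-\epsilon_{>i}$ when the rise is inserted after the last $V$ of block $i$, and $2(s-i)+c_{\ge i}-j$ when it is inserted before the $(j+1)$st $V$ of that block.
\item Prove, by induction with the $q$-Pascal identities, the two new Gaussian product identities $T_{s,h}(z)=q^{\binom h2}\qbinom{s}{h}\prod_{j=0}^{s-1}(1+zq^j)$ and $U_{s,c}(z)=\qbinom{s+c-1}{c}\prod_{j=s}^{s+c-1}(1+zq^j)$.
\end{itemize}
Only the product $T_{s,h}U_{s,c}$, in which the two partial products glue into $\prod_{j=0}^{p+b-1}(1+zq^j)$, yields $q^{\binom v2}\qbinom{p+b}{v}$ after taking the coefficient of $z^v$.

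\textbf{Base case.} The relevant base case is $n=0$, together with the vanishing of both sides at $r=0$ for $n>0$, not $n=r$.
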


\begin{proof}
    Let
    \[ \LD_{q,t}(n,r)^{\ast k, \bullet l}_{\circ d} = \sum_{\pi \in \LD(n,r)^{\ast k, \bullet l}_{\circ d}} q^{\dinv(\pi)} t^{\area(\pi)} \]
    and 
    \[ S(n,r)^{\ast k, \bullet l}_{\circ d} = \langle \Theta_{e_l} \Theta_{e_k} \nabla E_{n-k-l,r}, e_{n-d} h_d \rangle \]
    be the RHS and the LHS of \Cref{thm:schroder}, respectively. By \Cref{prop:algebraic_recursion} and \Cref{prop:combinatorial_recursion}, both families of polynomials satisfy the same recursion and initial condition.
    
    For $n>0$, the case $r=0$ is zero on both sides. For $r\geq1$, every recursive term has size $n-r-b<n$, so the result follows by induction.
\end{proof}

\subsection{Algebraic recursion}

The goal of this subsection is to prove the following proposition, which gives a recursion for $S(n,r)^{\ast k, \bullet l}_{\circ d}$.

\begin{proposition}
    \label{prop:algebraic_recursion}
    For $n, k, l, r, d \geq 0$, we have
    \begin{align*}
        S(n,r)^{\ast k, \bullet l}_{\circ d} & = t^{n-r-k-l} \sum_{b=0}^{l} \sum_{p=0}^{r} t^{l-b}  q^{\binom{p}{2}} \qbinom{r}{p} \qbinom{r+b-1}{b} \\
        & \quad \times \sum_{u=0}^{n-r-k-l} \sum_{v=0}^{p+b} q^{\binom{v}{2}} \qbinom{p+b}{v} \qbinom{p+b+u-1}{u} S(n-r-b, u+v)^{\ast k-v, \bullet l-b}_{\circ d-(r-p)},
    \end{align*}
    with the initial condition $S(0,r)^{\ast k, \bullet l}_{\circ d} = \delta_{r,0} \delta_{k,0} \delta_{l,0} \delta_{d,0}$.
\end{proposition}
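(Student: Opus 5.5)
We derive the recursion by reducing to known symmetric function identities for the Delta conjecture's Schröder cases, applied once for each Theta operator. First we commute the two operators, using that $\Theta_{e_l}\Theta_{e_k}$ is symmetric in $k,l$. We then treat the $\Theta_{e_l}$ layer, which produces the sum over $b$ (the decorated valleys "peeled off" at the first return). The $\Theta_{e_k}$ layer, combined with $\nabla E_{n-k-l,r}$, produces the sum over $v$ (decorated rises on the next level) and the sum over $u$ (the new touch number).

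The first step is to recall the standard expansion
\[ \Theta_{e_k}\nabla E_{m,r} \quad\text{and}\quad h_d^\perp \]
manipulations from \cite{DAdderioIraciVandenWyngaerd2019GeneralizedDeltaSchroeder} and \cite{DAdderioIraci2023}. We write $\langle F, e_{n-d}h_d\rangle = \langle h_d^\perp F, e_{n-d}\rangle$ and use the Macdonald-basis formula for $\Theta_{e_k}\nabla E_{m,r}$ coming from the Cauchy--Macdonald identity together with the Pieri coefficients $d^{e_j}_{\mu\nu}$. The known recursion of the Delta valley Schröder case (the $l$-operator alone) has the shape
\[ t^{l-b}\,q^{\binom p2}\qbinom rp\qbinom{r+b-1}{b}, \]
with $r-p$ marked peaks removed at the bottom level. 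The Delta rise Schröder case has the shape
\[ q^{\binom v2}\qbinom{p+b}{v}\qbinom{p+b+u-1}{u}, \]
where $p+b$ plays the role of the number of steps on the next diagonal available to host rises. We would isolate each as an operator identity of the form
\[ h_j^\perp \Theta_{e_l} G = \sum (\cdots)\,\Theta_{e_{l-b}} (\cdots) G, \]
valid for arbitrary $G$, rather than only for $G=\nabla E$. This generality is what allows composing the two identities.

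Concretely, we plan three steps. (i) Prove a lemma expressing $h_d^\perp\Theta_{e_l}\Theta_{e_k}\nabla E_{m,r}$ through the generating function $e_m[X\frac{1-z}{1-q}]$. We replace $E_{m,r}$ by the $z$-expansion, so that the touch variable is carried by $(z;q)_r$. (ii) Apply the known "superization" identity
\[ \Theta_{e_k}\nabla e_m\!\left[X\tfrac{1-z}{1-q}\right] \]
in terms of $\Delta$-operators, and extract the bottom row contribution with $t^{n-r-k-l}$. This factor accounts for the area increase of every undecorated non-bottom step when the bottom diagonal is removed. (iii) Re-expand in the $E_{\cdot,u+v}$ basis. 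This is where \Cref{prop:q_sum} enters to recombine
\[ \qbinom rp\qbinom{r+b-1}b \]
with the $c$-sum, and the $q$-binomial theorem (\Cref{thm:q-binomial}) converts Pochhammer symbols into the $q^{\binom v2}\qbinom{p+b}v$ factors. The initial condition is immediate from the conventions $E_{0,0}=1$ and $\Theta_f 1=0$ for $\deg f\ge1$.

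The main obstacle is step (iii): interleaving the two Theta operators. The valley-peeling changes the number of slots on the next level from $p$ to $p+b$, and one must check that the rise-insertion identity applies with $p+b$ in place of $p$. We would first verify the formula at $q=1$ against \cite{DIIP2026LeavingTheHall} and in the degenerate cases $k=0$ and $l=0$, where it must collapse to the known Delta Schröder recursions. We would then attempt the general case by proving the operator lemma for $\Theta_{e_l}$ acting on an arbitrary $\Theta_{e_k}\nabla E_{m,r}$ via the Pieri rule, tracking the $t^{l-b}$ factor through $\mathbf{\Pi}$.
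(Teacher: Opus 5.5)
There is a genuine gap. What you have written is a plan, not a proof, and the step you flag as the main obstacle, interleaving the two Theta operators, is exactly where no mechanism is given.

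\textbf{What is missing in your route.} The operator identity $h_j^\perp\Theta_{e_l}G=\sum(\cdots)\Theta_{e_{l-b}}(\cdots)G$ ``for arbitrary $G$'' is never stated, and it is not clear that one exists in a usable form. The first-layer coefficient $\qbinom{r+b-1}{b}$ involves the touch parameter $r$ of $G=\nabla E_{m,r}$. The removal of $r-p$ marked peaks comes from the pairing with $e_{n-d}h_d$. So this layer reflects the interaction of $\Theta_{e_l}$ with this specific $G$ and with the Schröder pairing, not a property of $\Theta_{e_l}$ alone. Step (iii) would also require you to recognize the output again as a pairing $\langle\Theta_{e_{l-b}}\Theta_{e_{k-v}}\nabla E_{\cdot,u+v},e_{\cdot}h_{\cdot}\rangle$, which you do not address. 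Finally, \Cref{prop:q_sum} plays no role on the algebraic side; the paper only uses it to restate the combinatorial recursion. Checks at $q=1$ or at $k=0$ or $l=0$ are consistency tests, not a proof.

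\textbf{The missing idea.} Merge all operators into a single Theta operator acting on a one-row Macdonald polynomial. This turns $S$ into a member of an already-studied family.
\begin{itemize}
\item With $m=n-r-k-l$, a formula of Haglund \cite{Haglund2004Schroeder} gives $\nabla E_{m+r,r}=t^m\Theta_{h_m}\Ht_{(r)}$. By multiplicativity of Theta operators, $S(n,r)^{\ast k,\bullet l}_{\circ d}=t^m\langle\Theta_{e_ke_lh_m}\Ht_{(r)},e_{n-d}h_d\rangle$.
\item Expand $\Theta_g\Ht_{(r)}$ through Pieri coefficients and use $\langle\Ht_\mu,e_{n-d}h_d\rangle=e_{n-d}[B_\mu]$. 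Haglund's summation formula then turns the pairing into $(\Delta_{e_ke_lh_m}e^\ast_{n-d})[(1-t)(1-q^r)]$.
\item Use the hook-coefficient identity again, now to trade the eigenvalue $e_k[B_\mu]$ for a pairing with $e_kh_{n-k-d}$. Together with the Cauchy identity for $Y=(1-t)(1-q^r)$, this gives $t^m\langle\Delta_{h_m}\Delta_{e_l}e_{n-d}[X[r]_q],e_kh_{n-k-d}\rangle$.
\end{itemize}
This is exactly the family of \cite{DAdderioIraciVandenWyngaerd2019GeneralizedDeltaSchroeder}, whose recursion (Theorem~3.4 there) is already known. One application of that recursion does not return an expression with the same roles of the parameters. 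It permutes the roles of the degrees of $\Delta_h$, $\Delta_e$ and of the $e$-factor in the scalar product. So it must be applied twice, after which the same identification read backwards yields $S(n-r-b,u+v)^{\ast k-v,\bullet l-b}_{\circ d-(r-p)}$.

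This also explains why your two ``layers'' have the same shape. The factor $q^{\binom{v}{2}}\qbinom{p+b}{v}\qbinom{p+b+u-1}{u}$ is $q^{\binom{p}{2}}\qbinom{r}{p}\qbinom{r+b-1}{b}$ under $(r,p,b)\mapsto(p+b,v,u)$. So it is one recursion iterated, not a valley-type recursion composed with a rise-type one.
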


We are going to prove that this family of polynomials is, up to a change of variables, the family
\[F^{d,l}_{n-k,r,k} \coloneqq t^{n-r-k-l} \langle \Delta_{h_{n-r-k-l}} \Delta_{e_l} e_{n-d} \left[ X [r]_q \right], e_k h_{n-k-d} \rangle \]
considered in \cite[Equation~(28)]{DAdderioIraciVandenWyngaerd2019GeneralizedDeltaSchroeder}, which is known to satisfy the same recursion: the one given in \cite[Theorem~3.4]{DAdderioIraciVandenWyngaerd2019GeneralizedDeltaSchroeder} (cf.\ \cite[Theorem~5.1]{Iraci2019PhD}), iterated twice. We start with a preliminary lemma.

\begin{lemma}
    \label{lem:theta_inner_product}
    Let $0 \leq d, r \leq n$ and let $g \in \Lambda$ be homogeneous of degree $n - r$. Then
    \[ \langle \Theta_g \widetilde{H}_{(r)}, e_{n-d} h_d \rangle = \left(\Delta_g e_{n-d}^* \right)[(1-t)(1-q^r)]. \]
\end{lemma}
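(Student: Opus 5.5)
We expand both sides in the Macdonald basis and pair term by term. By \Cref{def:theta}, for $r \geq 1$,
\[ \Theta_g \Ht_{(r)} = \mathbf{\Pi}\, g^\ast \, \mathbf{\Pi}^{-1} \Ht_{(r)} = \Pi_{(r)}^{-1}\, \mathbf{\Pi}\bigl( g^\ast \Ht_{(r)} \bigr) = \Pi_{(r)}^{-1} \sum_{\mu \vdash n} d^{g^\ast}_{\mu,(r)}\, \Pi_\mu \Ht_\mu . \]
Here $\Pi_{(r)} = (q;q)_{r-1}$. The case $r=0$ should be checked separately: there $\Ht_\varnothing = 1$, and both sides reduce to $\langle g, e_{n-d}h_d\rangle$ evaluated appropriately (the substitution $[(1-t)(1-q^0)] = [0]$ kills everything except degree zero, so one also needs $n=0$). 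We may assume $r \geq 1$, treating $r=0$ as a degenerate convention.

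Next we pair with $e_{n-d}h_d$. The standard fact to invoke (as in \cite{DAdderioIraciVandenWyngaerd2019GeneralizedDeltaSchroeder}) is
\[ \langle \Ht_\mu, e_{n-d}h_d \rangle = e_{n-d}[B_\mu - 1] + e_{n-d-1}[B_\mu - 1] = h_d\text{-}e\text{-expression}, \]
which is essentially $e_{n-d}[B_\mu]$ after using $e_j[B_\mu - 1] + e_{j-1}[B_\mu-1] = e_j[B_\mu]$. Rather than using this directly, a cleaner route is to convert Pieri coefficients to their dual versions. We write
\[ d^{g^\ast}_{\mu,(r)} = \frac{w_{(r)}}{w_\mu}\, c^{\ldots} \]
via the star scalar product, i.e.\ $d^{f}_{\mu\nu} = \langle f \Ht_\nu, \Ht_\mu\rangle_\ast / w_\mu$. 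Then we move $g^\ast$ across the star scalar product, where multiplication by $g^\ast$ is adjoint to $(\omega g)^\perp$ up to plethysm. The resulting sum over $\mu$ becomes, via the Cauchy--Macdonald identity, the statement that $\sum_\mu \Pi_\mu \Ht_\mu[X]\Ht_\mu[Y]/w_\mu$ specializes at $Y = M$-type alphabets. Concretely, $\Pi_\mu = \Ht_\mu$-eigenvalue equal to $\Ht_\mu[1-u]/(1-u)$ at $u\to 1$ limit, and $\Ht_{(r)}$-information enters through the evaluation $\Ht_\mu[1-q^r\,?]$.

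The key identity I would aim for is
\[ \sum_{\mu\vdash n} \frac{\Pi_\mu\, \Ht_\mu[X]\, \langle g^\ast \Ht_{(r)}, \Ht_\mu\rangle_\ast}{w_\mu} \]
expressed through $\Delta_g$ acting on $e_{n-d}^\ast$ and evaluated at the alphabet $(1-t)(1-q^r)$. The appearance of $1-q^r$ comes from $\Ht_\mu[1-q^r]$-type evaluations, e.g.\ $\Ht_{(r)}[1-u] = (u;q)_r$, so $\Pi_{(r)}^{-1}$ cancels the factor $(q;q)_{r-1}$ arising at $u=q$. The plan is therefore:
\begin{itemize}
\item[(i)] write $\langle\cdot, e_{n-d}h_d\rangle$ as an evaluation of $e_{n-d}^\ast$ against $\Ht_\mu$ through $\langle \Ht_\mu, e_{n-d}h_d\rangle = \langle \Ht_\mu, (e_{n-d})^\ast\cdot\ldots\rangle_\ast$;
\item[(ii)] use self-adjointness of $\Delta_g$ and $\mathbf{\Pi}$ for $\langle\,,\rangle_\ast$, together with the symmetry of $g^\ast$-multiplication versus $\Delta_g$ (the known identity $\langle f\Ht_\nu,\Ht_\mu\rangle_\ast$ relates to $\Delta_f$ under the five-term/reciprocity $\Ht_\mu[1+u(MB_\nu-1)]$);
\item[(iii)] apply Macdonald reciprocity to turn $\Pi_\mu\Ht_\mu$ paired with $\Ht_{(r)}$ into an evaluation at $1-(1-t)(1-q^r)$, i.e.\ a plethysm at $(1-t)(1-q^r)$.
\end{itemize}

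The main obstacle is step (iii): getting exactly the alphabet $(1-t)(1-q^r)$ with no stray factors. Here I would first verify the formula for $g=1$, $n=r$, where it reads $\langle \Ht_{(n)}, e_{n-d}h_d\rangle = e_{n-d}^\ast[(1-t)(1-q^n)] = e_{n-d}[(1-q^n)/(1-q)] = q^{\binom{n-d}{2}}\qbinom{n}{n-d}$. This is correct, since $\Ht_{(n)}$ is the $q$-analog of $h_n$ pairing with $e_{n-d}h_d$. With that normalization fixed, I would then run the general reciprocity argument to settle the remaining factors.
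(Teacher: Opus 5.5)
Your setup matches the paper's: the expansion $\Theta_g\Ht_{(r)}=\Pi_{(r)}^{-1}\sum_{\mu\vdash n}d^{g^*}_{\mu,(r)}\Pi_\mu\Ht_\mu$ and the hook identity $\langle\Ht_\mu,e_{n-d}h_d\rangle=e_{n-d}[B_\mu]$ (which you wrote down and then set aside) reduce the lemma, for $r\ge1$, to the single identity
\[ \sum_{\mu\vdash n}\Pi_\mu\,e_{n-d}[B_\mu]\,d^{g^*}_{\mu,(r)}=\Pi_{(r)}\bigl(\Delta_g e_{n-d}^*\bigr)[MB_{(r)}]. \]
This identity is the whole content of the lemma, and your proposal never establishes it.

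Steps (i)--(iii) list tools rather than carry out an argument. Step (ii) appeals to an unspecified ``symmetry of $g^*$-multiplication versus $\Delta_g$''. But trading multiplication by $g^*$ for $\Delta_g$ is exactly what has to be proved, and it only holds after weighting by $\Pi_\mu$ and evaluating at $MB_{(r)}$. You explicitly postpone step (iii) as ``the main obstacle'', and the check at $g=1$, $n=r$ fixes only a normalization. The paper closes exactly this step by citing Haglund's summation formula \cite[Theorem~2.6]{Haglund2004Schroeder} with $A=g^*$, $F=e^*_{n-d}$, $\nu=(r)$, and then using $MB_{(r)}=(1-t)(1-q^r)$.

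Your $r=0$ discussion is also wrong as stated. By definition $\Theta_g 1=0$ when $\deg g\ge1$, so the left side is not $\langle g,e_{n-d}h_d\rangle$. Both sides simply vanish unless $n=0$.

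Your instinct that Macdonald reciprocity is the engine is sound. It fills the gap in a few lines if you keep the hook identity instead of detouring through the star scalar product.
\begin{itemize}
\item For nonempty $\alpha,\beta$, multiply the reciprocity identity for $\Ht_\alpha[1+u(MB_\beta-1)]/\prod_{c\in\alpha}(1-uq^{a'_\alpha(c)}t^{l'_\alpha(c)})$ by $1-u$ and set $u=1$. This gives $\Ht_\alpha[MB_\beta]/\Pi_\alpha=\Ht_\beta[MB_\alpha]/\Pi_\beta$.
\item Write $e_{n-d}[B_\mu]=e^*_{n-d}[MB_\mu]=\sum_{\lambda\vdash n-d}\Ht_\lambda[MB_\mu]/w_\lambda$.
\item Swap $\lambda$ and $\mu$ to get $\Pi_\mu\Ht_\lambda[MB_\mu]=\Pi_\lambda\Ht_\mu[MB_\lambda]$.
\item Resum over $\mu$ via $\sum_\mu d^{g^*}_{\mu,(r)}\Ht_\mu[MB_\lambda]=g[B_\lambda]\,\Ht_{(r)}[MB_\lambda]$.
\item Swap $(r)$ and $\lambda$ back. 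The factors $\Pi_\lambda$ cancel and you obtain $\Pi_{(r)}\sum_\lambda g[B_\lambda]\Ht_\lambda[MB_{(r)}]/w_\lambda=\Pi_{(r)}(\Delta_g e^*_{n-d})[MB_{(r)}]$.
\end{itemize}
The case $d=n$ needs a separate line, because there $\lambda=\varnothing$ and the $u=1$ form of reciprocity fails. For example, $\Pi_\mu=\lim_{u\to1}\Ht_\mu[1-u]/(1-u)$ gives $\sum_\mu\Pi_\mu d^{g^*}_{\mu,(r)}=g[0]\,\Pi_{(r)}$, which matches the right-hand side $(\Delta_g 1)[MB_{(r)}]=g[0]$.
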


\begin{proof}
    If $r=0$, the identity reduces to $0=0$ unless $n=d=0$, in which case it reduces to $1=1$.

    Assume $r \geq 1$. By definition of $\Theta_g$ and the Pieri coefficients, we have
    \[ \Theta_g \widetilde{H}_{(r)} = \mathbf{\Pi} \left( g^* \Pi_{(r)}^{-1} \widetilde{H}_{(r)} \right) = \Pi_{(r)}^{-1} \sum_{\mu \vdash n} d^{g^*}_{\mu,(r)} \Pi_\mu \widetilde{H}_\mu, \]
    so pairing with $e_{n-d} h_d$ and using the hook coefficient identity $\langle \widetilde{H}_\mu, e_{n-d} h_d \rangle = e_{n-d}[B_\mu]$, we obtain
    \[ \langle \Theta_g \widetilde{H}_{(r)}, e_{n-d} h_d \rangle = \Pi_{(r)}^{-1} \sum_{\mu \vdash n} \Pi_\mu e_{n-d}[B_\mu] d^{g^*}_{\mu,(r)}. \]
    By Haglund's summation formula \cite[Theorem~2.6]{Haglund2004Schroeder} (cf.\ \cite[Equation~(110)]{DAdderioIraciVandenWyngaerd2021ThetaOperators}), with $A = g^*$, $F = e_{n-d}^*$, and $\nu = (r)$,
    \[ \sum_{\mu \vdash n} \Pi_\mu e_{n-d}[B_\mu] d^{g^*}_{\mu,(r)} = \Pi_{(r)} \bigl(\Delta_g e_{n-d}^*\bigr)[MB_{(r)}]. \]
    Since $B_{(r)} = [r]_q$, we have $MB_{(r)} = (1-q)(1-t)[r]_q = (1-t)(1-q^r)$, and the claim follows.
\end{proof}

\begin{lemma}
    \label{lem:S-identity}
    For $n, k, l, r, d \geq 0$, we have
    \[ S(n,r)^{\ast k, \bullet l}_{\circ d} = t^{n-r-k-l} \langle \Delta_{h_{n-r-k-l}} \Delta_{e_l} e_{n-d} \left[ X [r]_q \right], e_k h_{n-k-d} \rangle. \]
\end{lemma}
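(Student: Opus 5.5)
The plan is to reduce both sides to the evaluation in \Cref{lem:theta_inner_product} and then compare them as plethystic expressions in $[r]_q$.

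\textbf{Step 1: rewrite the left-hand side.} The standard expansion of $\nabla E_{m,r}$ (cf.\ \cite{DAdderioIraciVandenWyngaerd2021ThetaOperators}) expresses it through a Theta operator applied to $\widetilde{H}_{(r)}$:
\[ \nabla E_{m,r} = t^{m-r}\,\Theta_{e_{m-r}} \widetilde{H}_{(r)}\bigl|_{\text{suitably}}, \]
or equivalently $\nabla E_{m,r}=t^{m-r}\Theta_{e_{m-r}}\nabla E_{r,r}$ with $\nabla E_{r,r}$ proportional to $\widetilde{H}_{(r)}$ after the normalisation $\widetilde{H}_{(r)}=(q;q)_r h_r[X/(1-q)]$. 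I would take this identity from \cite{DAdderioIraciVandenWyngaerd2021ThetaOperators} and track the constant carefully.

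Theta operators commute and multiply: $\Theta_f\Theta_g=\Theta_{fg}$. So with $m=n-k-l$ the left-hand side becomes a multiple of
\[ \langle \Theta_{e_l e_k e_{m-r}}\widetilde{H}_{(r)}, e_{n-d}h_d\rangle . \]
\Cref{lem:theta_inner_product}, with $g=e_le_ke_{m-r}$ of degree $n-r$, turns this into
\[ \bigl(\Delta_{e_le_ke_{m-r}}e_{n-d}^*\bigr)\bigl[(1-t)(1-q^r)\bigr]. \]

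\textbf{Step 2: rewrite the right-hand side the same way.} The right-hand side $t^{m-r}\langle \Delta_{h_{m-r}}\Delta_{e_l}e_{n-d}[X[r]_q],e_kh_{n-k-d}\rangle$ is the function $F^{d,l}_{n-k,r,k}$ of \cite{DAdderioIraciVandenWyngaerd2019GeneralizedDeltaSchroeder}. I would use that paper's computation (or redo it) to express it as a Macdonald sum. Concretely:
\begin{itemize}
\item expand $e_{n-d}[X[r]_q]$ by Cauchy--Macdonald, since $X[r]_q = X\,M B_{(r)}/M$;
\item this gives $e_{n-d}[X[r]_q]=\sum_\mu \widetilde{H}_\mu[X]\,(\cdots)[MB_{(r)}]/w_\mu$;
\item apply the Delta operators eigenvalue-wise;
\item pair with $e_kh_{n-k-d}$ using $\langle \widetilde{H}_\mu,e_kh_{n-k}\rangle=e_k[B_\mu]$, after moving $h_{n-k-d}$ across via $h^\perp$.
\end{itemize}
The upshot should be a sum $\sum_\mu e_k[B_\mu]\,h_{m-r}[B_\mu]\,e_l[B_\mu]\,(\cdots)$.

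\textbf{Step 3: compare the two Macdonald sums.} On the left, after unwinding Haglund's formula, I expect to get $\sum_\mu \Pi_\mu e_{n-d}[B_\mu]\,d^{g^*}_{\mu,(r)}$ with $g=e_le_ke_{m-r}$. The comparison then needs three reconciliations:
\begin{itemize}
\item $e_{m-r}$ on the left against $h_{m-r}$ together with the factor $t^{m-r}$ on the right;
\item the roles of $e_{n-d}$ and $e_k$ exchanged between the two sides, which should be handled by the symmetry of Haglund's formula in $(A,F)$ / Macdonald reciprocity;
\item an $\omega$-type relation to turn $e_{m-r}$ into $h_{m-r}$, namely $e_j^*[(1-t)(1-q^r)]$ versus $h_j[\ldots]$ via $\omega$ and the substitution $X\to -\epsilon X$.
\end{itemize}
A cleaner route, which I would try first, is to evaluate both sides directly as explicit $q$-polynomials. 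Since $e_j^*[(1-t)(1-q^r)]=e_j[(1-q^r)/(1-q)]$, everything reduces to plethysms with alphabet $[r]_q$ and $\Delta$-eigenvalues.

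\textbf{Main obstacle.} The hardest step is the exchange of roles between $e_{n-d}$, which is evaluated at $B_\mu$ on the left, and $e_k$ in the pairing on the right. I would resolve it with the reciprocity form of Haglund's identity, $\sum_\mu \Pi_\mu F[B_\mu]d^{A}_{\mu\nu}$ symmetric under swapping $A$ and $F$ up to $\Delta$. I expect the constant bookkeeping ($t$-powers and the normalisation of $\widetilde{H}_{(r)}$) to need checking, and I would verify it against small cases such as $r=n$ and $k=l=0$.
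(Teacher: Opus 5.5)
There is a genuine gap, and it starts in Step~1: the expansion of $\nabla E$ you quote is wrong. Haglund's formula (cf.\ \cite[Remark~8.4]{DAdderioIraciVandenWyngaerd2021ThetaOperators}) reads $\nabla E_{n-k-l,r}=t^{n-r-k-l}\,\Theta_{h_{n-r-k-l}}\widetilde H_{(r)}$. The operator involves a complete homogeneous function, not an elementary one. There is also no normalising constant, since $\nabla E_{r,r}=\widetilde H_{(r)}$ exactly.

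Your version already fails at $(n,r,k,l,d)=(3,1,0,0,0)$. There $\Theta_{e_2}\widetilde H_{(1)}=\Theta_{e_2}\nabla e_1=\Delta'_{e_0}e_3=e_3$, so you would obtain $S(3,1)^{\ast 0,\bullet 0}_{\circ 0}=t^2$. In fact $\langle\nabla E_{3,1},e_3\rangle=qt^2+t^3$, coming from the area words $(0,1,1)$ and $(0,1,2)$, and this is indeed $t^2\langle\Delta_{h_2}e_3,h_3\rangle$. For the same reason, the ``$\omega$-type relation'' you plan to use in Step~3 to turn $e_{n-r-k-l}$ into $h_{n-r-k-l}$ cannot exist. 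In this example $\langle\Delta_{e_2}e_3,h_3\rangle=1$, while $\langle\Delta_{h_2}e_3,h_3\rangle=q+t$.

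Once $h$ replaces $e$, the obstacle you single out in Step~3 disappears. There is no need to return to the Pieri-coefficient sum over $\mu\vdash n$, or to look for a reciprocity of Haglund's formula; doing so only undoes what \Cref{lem:theta_inner_product} has already achieved. The argument is then:
\begin{enumerate}
\item With $g=e_ke_lh_{n-r-k-l}$, that lemma gives $(\Delta_g e_{n-d}^*)[(1-t)(1-q^r)]$.
\item Expanding $e_{n-d}^*=\sum_{\mu\vdash n-d}\widetilde H_\mu/w_\mu$ yields $\sum_{\mu\vdash n-d}e_k[B_\mu]\,e_l[B_\mu]\,h_{n-r-k-l}[B_\mu]\,\widetilde H_\mu[(1-t)(1-q^r)]/w_\mu$.
\item Trade only the factor $e_k[B_\mu]$ for $\langle\widetilde H_\mu,e_kh_{n-d-k}\rangle$. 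No $h^\perp$ is needed, since $|\mu|=n-d$.
\item Pull $\Delta_{h_{n-r-k-l}}\Delta_{e_l}$ outside and re-sum with Cauchy--Macdonald at $Y=(1-t)(1-q^r)$. This gives $e_{n-d}[X[r]_q]$ because $Y/M=[r]_q$.
\end{enumerate}
This is exactly the paper's argument.

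Your suggested ``cleaner route'' of evaluating $e_j^*$ at $(1-t)(1-q^r)$ directly does not work as stated, because $\Delta_g$ does not commute with that evaluation. The Macdonald expansion is what makes the evaluation possible.
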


\begin{proof}
    By definition of $S(n,r)^{\ast k, \bullet l}_{\circ d}$, we have
    \[ S(n,r)^{\ast k, \bullet l}_{\circ d} = \langle \Theta_{e_l} \Theta_{e_k} \nabla E_{n-k-l,r}, e_{n-d} h_d \rangle. \]
    
    Setting $m = n-r-k-l$, we apply Haglund's formula \cite[Theorem~2.5]{Haglund2004Schroeder} and \cite[Remark~8.4]{DAdderioIraciVandenWyngaerd2021ThetaOperators} to get
    \[ \nabla E_{m+r,r} = t^m \Theta_{h_m} \widetilde{H}_{(r)}[X], \]
    and by the multiplicativity of $\Theta$ operators, we obtain
    \[ S(n,r)^{\ast k, \bullet l}_{\circ d} = t^m \langle \Theta_{e_k e_l h_m} \widetilde{H}_{(r)}, e_{n-d} h_d \rangle. \]
    
    Now we apply \Cref{lem:theta_inner_product} with $g = e_k e_l h_m$, resulting in
    \[ \langle \Theta_g \widetilde{H}_{(r)}, e_{n-d} h_d \rangle = \bigl(\Delta_g e_{n-d}^* \bigr)[(1-t)(1-q^r)]; \]
    by the Macdonald--Cauchy identity, $e_{n-d}^* = \sum_{\mu \vdash n-d} \widetilde{H}_\mu / w_\mu$, so
    \[  S(n,r)^{\ast k, \bullet l}_{\circ d} = t^m \sum_{\mu \vdash n-d} \frac{e_k[B_\mu] e_l[B_\mu] h_m[B_\mu]}{w_\mu} \widetilde{H}_\mu[(1-t)(1-q^r)]. \]

    Using the hook-coefficient identity $\langle \widetilde{H}_\mu, e_k h_{n-k-d} \rangle = e_k[B_\mu]$, we can rewrite the above as
    \begin{align*}
        S(n,r)^{\ast k, \bullet l}_{\circ d} & = t^m \sum_{\mu \vdash n-d} h_m[B_\mu] e_l[B_\mu] \frac{\widetilde{H}_\mu[(1-t)(1-q^r)]}{w_\mu} \langle \widetilde{H}_\mu, e_k h_{n-k-d} \rangle \\
        & = t^m \left\langle \Delta_{h_m} \Delta_{e_l} \sum_{\mu \vdash n-d} \frac{\widetilde{H}_\mu[(1-t)(1-q^r)] \widetilde{H}_\mu[X]}{w_\mu}, e_k h_{n-k-d} \right\rangle.
    \end{align*}    
    On the other hand, by the Macdonald--Cauchy identity applied with $Y = (1-t)(1-q^r)$,
    \[ e_{n-d}\left[X [r]_q \right] = \sum_{\mu \vdash n-d} \frac{\widetilde{H}_\mu[(1-t)(1-q^r)] \widetilde{H}_\mu[X]}{w_\mu}, \]
    so
    \[ S(n,r)^{\ast k, \bullet l}_{\circ d} = t^m \langle \Delta_{h_m} \Delta_{e_l} e_{n-d}\left[X [r]_q \right], e_k h_{n-k-d} \rangle, \]
    as desired.
\end{proof}

\begin{proof}[Proof of \Cref{prop:algebraic_recursion}]
    Set $m=n-r-k-l$. By commutativity of the Theta operators and \Cref{lem:S-identity}, we have 
    \[ S(n,r)^{\ast k,\bullet l}_{\circ d} = F_{n-l,r;l}^{(d,k)}. \]
    Applying \cite[Theorem~3.4]{DAdderioIraciVandenWyngaerd2019GeneralizedDeltaSchroeder} once gives
    \[
        S(n,r)^{\ast k,\bullet l}_{\circ d} = t^m \sum_{b=0}^{l} \sum_{p=0}^{r} q^{\binom{p}{2}} \qbinom{r}{p} \qbinom{r+b-1}{b} F_{n-d,p+b;m}^{(n-d-k,n-l-d-p)}.
    \]
    If $F$ is now described by an initial condition, the result follows by an elementary computation. Otherwise, we can iterate the recursion to obtain
    \begin{align*}
        F_{n-d,p+b;m}^{(n-d-k,n-l-d-p)} & = t^{l-b}\sum_{u=0}^{m} \sum_{v=0}^{p+b} q^{\binom{v}{2}} \qbinom{p+b}{v}\qbinom{p+b+u-1}{u} F_{n-r-l,u+v;l-b}^{(d-r+p,k-v)}.
    \end{align*}
    By \Cref{lem:S-identity}, we have
    \[ F_{n-r-l,u+v;l-b}^{(d-r+p,k-v)} = S(n-r-b,u+v)^{\ast(k-v),\bullet(l-b)}_{\circ(d-r+p)}, \]
    and putting the pieces together yields \Cref{prop:algebraic_recursion}.
\end{proof}

\subsection{Combinatorial recursion}
The goal of this subsection is to prove the following proposition, which gives the same recursion as \Cref{prop:algebraic_recursion} for $\LD_{q,t}(n,r)^{\ast k, \bullet l}_{\circ d}$.

\begin{proposition}
    \label{prop:combinatorial_recursion}
    For $n, k, l, r, d \geq 0$, we have
    \begin{align*}
        \LD_{q,t}(n,r)^{\ast k, \bullet l}_{\circ d} & = t^{n-r-k-l} \sum_{b=0}^{l} \sum_{p=0}^{r} t^{l-b}  q^{\binom{p}{2}} \qbinom{r}{p} \qbinom{r+b-1}{b} \\
        & \quad \times \sum_{u=0}^{n-r-k-l} \sum_{v=0}^{p+b} q^{\binom{v}{2}} \qbinom{p+b}{v} \qbinom{p+b+u-1}{u} \LD_{q,t}(n-r-b, u+v)^{\ast k-v, \bullet l-b}_{\circ d-(r-p)},
    \end{align*}
    with the initial condition $\LD_{q,t}(0,r)^{\ast k, \bullet l}_{\circ d} = \delta_{r,0} \delta_{k,0} \delta_{l,0} \delta_{d,0}$.
\end{proposition}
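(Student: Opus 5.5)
The recursion will be proved by a decomposition that removes the bottom layer of the path. Fix $\pi \in \LD(n,r)^{\ast k,\bullet l}_{\circ d}$. Its $r$ undecorated steps on the main diagonal, together with the decorated valleys on diagonal $0$ (there are $b$ of these, $0\le b\le l$), form the steps with $a_i=0$. We delete all of them and lower every remaining step by one diagonal. The result is a smaller path $\pi'$ of size $n-r-b$.

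The steps of $\pi$ on diagonal $1$ become the diagonal-$0$ steps of $\pi'$. The decorated rises among them (say $v$ of them) lose their decoration, since they now sit on the bottom diagonal. They become touching steps of $\pi'$ together with $u$ steps that were already undecorated. Hence $\pi'$ has touch number $u+v$ and $k-v$ decorated rises. Among the $r$ touching steps of $\pi$, the marked peaks (say $r-p$ of them) disappear, so $\pi'$ has $d-(r-p)$ marked peaks. The area contributions are immediate: every undecorated non-touching step loses one unit, giving $t^{n-r-k-l}$. The $l-b$ decorated valleys that survive are counted once more, which gives $t^{l-b}$. The exact bookkeeping of these two factors is the first thing I would check.

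Conversely, $\pi$ is rebuilt from $\pi'$ by choosing four things:
\begin{itemize}
\item which $r-p$ of the $r$ new touching steps are marked peaks;
\item how the $b$ diagonal-$0$ decorated valleys interleave with the $r$ touching steps in reading order;
\item which $v$ of the $p+b$ available positions are attached to the now decorated rises on diagonal $1$;
\item how the $u$ undecorated diagonal-$1$ steps interleave with the $p+b$ slots.
\end{itemize}
I would arrange these choices so that the $\dinv$ generated between diagonal $0$ and diagonal $1$ factors as follows.
\begin{enumerate}
\item Pairs within diagonal $0$ give $q^{\binom p2}\qbinom rp$. The unmarked touching steps attack each other, marked ones attack nothing later on the same diagonal, and the standard inversion-counting argument applies.
\item Placing the $b$ decorated valleys among the $r$ touching steps gives $\qbinom{r+b-1}{b}$. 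A decorated valley on diagonal $0$ cannot come first, and its pairs with earlier touching steps are offset by the $-\#(\dv\setminus\dv^\ast)$ correction.
\item Pairs of types~\eqref{enum:ndinv_2} and~\eqref{enum:ndinv_4} between the diagonal-$1$ steps and the $p+b$ slots (the unmarked touching steps together with the decorated valleys) give $q^{\binom v2}\qbinom{p+b}{v}\qbinom{p+b+u-1}{u}$. The new correction $-\#\{i\stackrel{\ast}{\rightarrow}j\}$ and the star-valley terms are what should make decorated rises behave like the $v$-chosen objects.
\end{enumerate}
Inversions internal to higher diagonals are unchanged by the shift, so they contribute exactly $\dinv(\pi')$. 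There is one exception: diagonal-$1$ steps that were decorated rises in $\pi$ interact differently in $\pi'$, and for these the star-attack correction must compensate precisely.

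The main obstacle is the third factor. Star valleys and star-attacks couple decorated rises on diagonal $1$ with decorated valleys on diagonal $0$, so a naive slot-by-slot $q$-count fails. The paper itself warns that no direct $q$-binomial interpretation of each inversion type holds. My first attempt would be to group the contributions differently. I would compute the total $\dinv$ generated by diagonals $0$ and $1$ for a fixed shape of those two diagonals, sum over configurations, and then use \Cref{prop:q_sum} together with the $q$-binomial theorem (\Cref{thm:q-binomial}). These identities would re-collapse the sum into the product $q^{\binom p2}\qbinom rp\qbinom{r+b-1}{b}$ times the $(u,v)$ factor.

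In other words, I would first prove a two-variable generating identity with an extra summation index $c$. This index counts decorated valleys that are star valleys, equivalently those preceded by a decorated fall. I would then recognize the sum over $c$ as the right-hand side of \Cref{prop:q_sum}, which matches the stated formula.

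The initial condition is trivial: the empty path is the only object of size $0$.
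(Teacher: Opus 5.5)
Your decomposition (delete the steps on diagonal $0$, lower the rest, track $b,p,u,v$) and your area count agree with the paper. However, the bookkeeping of marks is wrong. A decorated valley on diagonal $0$ can itself be a marked peak, and it is deleted together with its mark. Since $\bar\pi$ must have $d-(r-p)$ marked peaks, $r-p$ has to count \emph{all} marked peaks on diagonal $0$, not only those among the $r$ touching steps. Moreover, a marked decorated valley cannot receive an inserted subpath (it would stop being a peak), and no decorated valley may directly follow a marked peak. So your slots (unmarked touching steps plus all decorated valleys) and your factors (1)--(2) fail already at $k=0$. For instance, with $r=b=1$ and $p=0$ the recursion predicts weight $1$, realized by the word $ND$: an unmarked touching step followed by a marked decorated valley. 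In your model the touching step is marked and $PV$, $PD$ are forbidden, so you get $0$; meanwhile your $p=1$ class contains $ND$, for which $\bar\pi$ has the wrong number of marks. The fix is the paper's encoding by letters $N,V,P,D$, with $h$ marked decorated valleys: there are $p+h$ letters $N$, $r-p-h$ letters $P$, $b-h$ letters $V$, $h$ letters $D$, and the $p+b$ slots are the $N$'s and $V$'s. This $h$, not the number of star valleys, is the summation index of \Cref{prop:q_sum}. Star valleys cannot play that role: the identity does not involve $v$, and its sum is already nontrivial at $k=0$, where there are no star valleys (for $r=b=1$, $p=0$ all the weight sits at $c=1$).

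The second gap is the one you flag as the main obstacle, and the tools you name do not close it. For a fixed word $W$ the contributions genuinely do not factor: in \Cref{ex:no-factorization}, $NDN$ gives $q(q^2+1)$ and $NND$ gives $2q^2$. So no regrouping followed by \Cref{prop:q_sum} and the $q$-binomial theorem suffices. The missing step is an exact computation, for $W=P^{b_0}NV^{c_1}D^{\epsilon_1}\cdots NV^{c_s}D^{\epsilon_s}P^{b_s}$, of the exponent of each rise-decorated insertion. That exponent counts:
\begin{itemize}
\item its type~\eqref{enum:ndinv_2} inversions;
\item minus one star-attack for each later decorated valley;
\item if the next deleted step is a decorated valley, plus one for that valley's lost penalty and plus its new type~\eqref{enum:ndinv_3}/\eqref{enum:ndinv_4} inversions as a star valley.
\end{itemize}
The result is $(1+\epsilon_i)(s-i)-\epsilon_{>i}$ if the subpath is inserted after the last $V$ of block $i$ (after its $N$ if $c_i=0$). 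It is $2(s-i)+c_{\geq i}-j$ if the subpath is inserted after the $j$th $V$ of block $i$, with $0\le j<c_i$. You then need the two Gaussian product identities of \Cref{lem:TU_formulas}, proved by induction with $q$-Pascal. After summing over placements of the $V$'s and $D$'s, weighted by $q$ to the number of $N$--$V/D$ inversions minus $b$, the $z$-generating functions become $T_{s,h}(z)=q^{\binom{h}{2}}\qbinom{s}{h}\prod_{j=0}^{s-1}(1+zq^j)$ and $U_{s,c}(z)=\qbinom{s+c-1}{c}\prod_{j=s}^{s+c-1}(1+zq^j)$. Only after this summation is $[z^v]$ equal to $q^{\binom{v}{2}}\qbinom{p+b}{v}$ for every $h$. \Cref{prop:q_sum} is then used only to reassemble the $h$-sum into $q^{\binom{p}{2}}\qbinom{r}{p}\qbinom{r+b-1}{b}$.
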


\begin{proof}
    First, we restate our recursion to match \cite[Theorem~4.11]{DAdderioIraci2023}, that is, the corresponding statement for $k=0$. Using \Cref{prop:q_sum}, we can rewrite the recursion as
    \begin{align*}
        \LD_{q,t}(n,r)^{\ast k, \bullet l}_{\circ d} & = t^{n-r-k-l} \sum_{b=0}^{l} \sum_{h=0}^{b} \sum_{p=0}^{r-h} t^{l-b}  q^{\binom{h}{2}} \qbinom{p+h}{h} q^{\binom{p+h}{2}} \qbinom{r}{p+h} \qbinom{p+b-1}{b-h} \\
        & \quad \times \sum_{u=0}^{n-r-k-l} \sum_{v=0}^{p+b} q^{\binom{v}{2}} \qbinom{p+b}{v} \qbinom{p+b+u-1}{u} \LD_{q,t}(n-r-b, u+v)^{\ast k-v, \bullet l-b}_{\circ d-(r-p)},
    \end{align*}
    which, up to the change of variables $b \mapsto j$, $h \mapsto u$, $p \mapsto v$, $u \mapsto s$, recovers \cite[Theorem~4.11]{DAdderioIraci2023} when $k=0$ (implying $v=0$).

    We prove this equivalent form of the recursion. The base case $n=0$ is clear, as $\LD(0,r)^{\ast k, \bullet l}_{\circ d}$ is empty unless $r=k=l=d=0$.

    Given $\pi \in \LD(n,r)^{\ast k, \bullet l}_{\circ d}$, we construct a smaller Dyck path $\bar\pi$ as follows. For each vertical step starting on the main diagonal, we delete that step together with the horizontal step immediately preceding the next return to the main diagonal.
    
    We want to check that $\bar\pi$ satisfies the compatibility condition between marked peaks and decorated valleys. First, every surviving marked peak is still a peak. Indeed, if the horizontal step of a marked peak were one of the deleted steps, then that step ends on the main diagonal, forcing also the vertical step of the peak to start on the main diagonal and hence to be deleted as well. Therefore a marked peak either disappears entirely or survives as a marked peak in $\bar\pi$.

    Suppose now that a marked peak were immediately followed by a decorated valley in $\bar\pi$. By assumption, they cannot be consecutive in $\pi$, so some steps that touch the main diagonal must lie between them in $\pi$. This implies that in $\pi$ the decorated valley is preceded by a deleted vertical step. Hence it is not a valley in $\pi$, but a rise, a contradiction.
        
    We keep track of the following parameters:

    \begin{itemize}
        \item $b$ is the number of decorated valleys on the main diagonal,
        \item $r-p$ is the number of marked peaks on the main diagonal,
        \item $h$ is the number of those marked peaks that are also decorated valleys,
        \item $u$ is the number of vertical steps on the first diagonal that are not decorated,
        \item $v$ is the number of vertical steps on the first diagonal that are decorated rises.
    \end{itemize}
    With these parameters, we have that \[ \bar\pi \in \LD(n-r-b, u+v)^{\ast (k-v), \bullet (l-b)}_{\circ (d-(r-p))}. \]
    We can also derive these (nonnegative) quantities:

    \begin{itemize}
        \item $r+b$ is the total number of vertical steps on the main diagonal,
        \item $p+b$ is the number of vertical steps on the main diagonal that are not marked peaks,
        \item $b-h$ is the number of decorated valleys on the main diagonal that are not marked peaks,
        \item $p+h$ is the number of vertical steps on the main diagonal with no decorations or marks,
        \item $r-p-h$ is the number of marked peaks on the main diagonal that are not decorated valleys.
    \end{itemize}

    Since $\touch(\bar\pi) = u+v$, we have $u+v$ nondecorated vertical steps on the main diagonal of $\bar\pi$, each of them corresponding to a subpath of $\bar\pi$ starting with that step and ending with the next (nondecorated) vertical step on the main diagonal (or the end of the path). A preimage of $\bar\pi$ is obtained by distributing these subpaths among the $p+b$ deleted vertical steps on the main diagonal that are not marked peaks, in their original order, and assigning decorations to the first step of the first subpath inserted in $v$ of these slots.

    We first determine the change in area. After the deletion, every surviving vertical step is shifted down by one diagonal. There are $n-r-b$ such steps: on the main diagonal, we have $r$ steps that are not decorated valleys, and $b$ steps that are decorated valleys. The vertical steps carrying the $k$ rise decorations all survive the deletion and do not contribute to $\area(\pi)$; therefore each of the remaining $n-r-b-k$ surviving steps decreases its area contribution by one, and hence
    \[ \area(\pi) = \area(\bar\pi) + n-r-b-k, \]
    which gives the factor $t^{n-r-k-l} t^{l-b} t^{\area(\bar\pi)}$. 

    All contributions to $\dinv$ involving only surviving indices are unchanged. The surviving entries of the area word all decrease by one, and their relative order and peak marks are preserved. The two steps immediately preceding a surviving decorated valley lie within the same subpath. If they form a fall, its matched rise also lies in the subpath, and the relevant decoration is preserved. Thus the star-valley status is unchanged. Finally, a rise that loses its decoration is such that its star-attack relations involve only deleted valleys on the main diagonal. Each of the subpaths just described contributes exactly one surviving step in the next diagonal that is not a decorated valley. If there are $j$ unmarked deleted steps to its right, it contributes $j$ diagonal inversions of type \eqref{enum:ndinv_2} with the main diagonal.
    
    In order to understand the remaining contributions to $\dinv$, we encode the deleted vertical steps of $\pi$ in a word $W$ of length $r+b$ with letters $N,V,P,D$, corresponding to a deleted vertical step carrying, respectively, neither decoration nor mark, only a valley decoration, only a peak mark, or both. The word $W$ starts with $N$ or $P$, and every $V$ or $D$ follows an $N$ or $V$. These conditions are also sufficient for the reconstructed path to satisfy the required compatibility.
    
    By construction, the number of letters $N$ is $s \coloneqq p+h$, the number of letters $V$ is $b-h$, the number of letters $P$ is $r-p-h$, and the number of letters $D$ is $h$. The positions in which we may insert subpaths are those corresponding to letters $N$ or $V$, since we cannot go up from a peak as it would destroy the peak itself. We have $p+h+b-h=p+b$ such positions; we will also need to choose $v$ of them to receive restored rise decorations.
        
    If $p+b=0$, all the steps are marked peaks, $\bar\pi$ is empty, and $u=v=0$; the required contribution is $1$. In what follows, assume $p+b>0$. The word $W$ can be written uniquely in the form
    \[
        P^{b_0} N V^{c_1} D^{\epsilon_1} \cdots P^{b_{s-1}} N V^{c_s} D^{\epsilon_s} P^{b_{s}},
    \]
    with $\sum_i b_i = r-p-h$, $\sum_i c_i = b-h$, $\sum_i \epsilon_i = h$, and $\epsilon_i \in \{0,1\}$ for all $i$.

    The factor $q^{\binom{p+h}{2}}$ counts the diagonal inversions of type \eqref{enum:ndinv_1} between the $p+h$ vertical steps on the main diagonal with no decorations or marks (that is, the $N$ letters) among themselves. The factor \[ \qbinom{r}{p+h} \] counts the diagonal inversions of type \eqref{enum:ndinv_1} between the $p+h$ vertical steps on the main diagonal with no decorations or marks (the $N$ letters) and the $r-(p+h)$ marked peaks on the main diagonal that are not decorated valleys (the $P$ letters): we have a diagonal inversion whenever one of the former appears to the left of one of the latter.
    
    We now insert the subpaths. The path $\bar\pi$ has $u+v$ ordered touch-subpaths. Given a deleted-step word $W$, its $p+b$ letters of type $N$ or $V$ determine the insertion slots. For each slot $j$, choose $u_j \geq 0$ and $\delta_j \in \{0,1\}$, with
    \[ \sum_j u_j=u,\qquad \sum_j\delta_j=v. \]
    In the order of the slots, distribute the touch-subpaths into consecutive groups of sizes $u_j+\delta_j$. If $\delta_j=1$, decorate the first vertical step of the first subpath in that slot. Its $u_j$ remaining subpaths begin with nondecorated steps. If $\delta_j=0$, all $u_j$ first steps are not decorated. Only the first subpath of a slot can begin with a restored rise, because each later subpath is preceded by a horizontal step. These data determine the preimage uniquely.

    Index the slots from right to left by $j=0,\ldots,p+b-1$. The ordinary insertions contribute $\sum_j j u_j$ to $\dinv$, independently of the binary choices $\delta_j$. Hence their generating function is 
    \[ \sum_{u_0+\cdots+u_{p+b-1}=u}q^{\sum_j j u_j} = \qbinom{p+b+u-1}{u}. \]
    Indeed, this binomial keeps track of the dinv of type \eqref{enum:ndinv_2} between the $u$ nondecorated vertical steps on the first diagonal and the $p+b$ steps on the main diagonal that are not marked peaks (the $N$ and $V$ letters): we have a diagonal inversion whenever one of the former appears to the left of one of the latter. The $-1$ in the $q$-binomial accounts for the fact that the first step of the path lies to the left of all the $u$ steps on the first diagonal, and thus cannot contribute to any diagonal inversions of type \eqref{enum:ndinv_2}.    
    
    If we disregard decorations on the rises, and pretend there is no star valley on the main diagonal, the remaining contribution to $\dinv$ is given by the number of pairs $(i,j)$ with $i<j$ such that $W_i=N$ and $W_j=V$ or $D$, minus the number of restored decorated valleys (which is $b$). This is equal to
    \[ \sum_{i=1}^s i c_i + \sum_{i=1}^s i \epsilon_i - b = \sum_{i=1}^s (i-1) (c_i + \epsilon_i), \]
    where the equality holds because $\sum_{i=1}^s (c_i + \epsilon_i) = (b-h) + h = b$.

    We now restore the $v$ subpaths that receive a rise decoration. In addition to the type~\eqref{enum:ndinv_2} inversions contributed by the first step of the subpath receiving the decoration, we must subtract one for every decorated main-diagonal valley to its right, with which it forms a star-attack relation. If the next deleted step is a decorated valley, it becomes a star valley: its former penalty of $-1$ disappears, and its new inversions contribute one for each later $N$, as well as one for each later $V$ or $D$ if this star valley is unmarked. Notice that transforming a decorated valley into a star valley does not lose any diagonal inversion.

    Recall that at most one of the restored first-diagonal rises can occur in each slot, so we insert these $v$ subpaths in $v$ different positions among the $p+b$ available ones in $W$, after any $N$ or $V$ letter in $W$. We distinguish two cases: we insert the subpath after the last $V$ of a block of the form $N V^{c_i} D^{\epsilon_i} P^{b_i}$ (which means after $N$ if $c_i = 0$), or before one of its $V$'s. Set
    \[ c_{>i} = \sum_{j>i} c_j, \quad \epsilon_{>i} = \sum_{j>i} \epsilon_j \]
    and the analogous quantities $c_{\geq i} = c_{>i} + c_i$ and $\epsilon_{\geq i} = \epsilon_{>i} + \epsilon_i$.

    In the first case, we obtain $(s-i) + c_{>i}$ inversions of type \eqref{enum:ndinv_2} with the $s-i$ letters $N$ and the $c_{>i}$ letters $V$ after the inserted subpath. Each of the $c_{>i} + \epsilon_{\geq i}$ decorated valleys to its right contributes a correction of $-1$ to the exponent, since each forms a star-attack relation with this decorated rise. If $\epsilon_i=1$, the $D$ ending the block becomes a star valley, and its contribution of $-1$ disappears, since it no longer belongs to $\dv \setminus \dv^\ast$. It also forms an inversion pair of type \eqref{enum:ndinv_4} with each of the $s-i$ letters $N$ to its right. Thus the total exponent is
    \[
        (s-i)+c_{>i} - \left(c_{>i}+\epsilon_{\geq i}\right) + \epsilon_i(1+s-i) = (1+\epsilon_i)(s-i) - \epsilon_{> i}.
    \]

    In the second case, suppose that we insert the subpath after the $j\th$ letter $V$ of the block $N V^{c_i} D^{\epsilon_i} P^{b_i}$, with $0 \leq j < c_i$. Then we obtain $(s-i) + c_{>i} + (c_i - j)$ inversions of type \eqref{enum:ndinv_2} with the $s-i$ letters $N$ and the $c_{>i} + (c_i - j) = c_{\geq i} - j$ letters $V$ after the inserted subpath. Each of the $c_{\geq i} - j + \epsilon_{\geq i}$ decorated valleys to its right contributes a correction of $-1$ to the exponent, since each forms a star-attack relation with this decorated rise. The $(j+1)\st$ letter $V$ of the block becomes a star valley, so its contribution of $-1$ disappears, since it no longer belongs to $\dv \setminus \dv^\ast$.  It also forms an inversion pair of type \eqref{enum:ndinv_3} with each of the $c_{\geq i} - j + \epsilon_{\geq i} - 1$ decorated valleys to its right, and an inversion pair of type \eqref{enum:ndinv_4} with each of the $s-i$ letters $N$ to its right. Thus the total exponent is
    \[
        (s-i) + c_{\geq i} - j - \left(c_{\geq i}-j+\epsilon_{\geq i} \right) + 1 + (c_{\geq i}-j+\epsilon_{\geq i} - 1) + (s-i) = 2(s-i) + c_{\geq i} - j.
    \]
    
    We use an auxiliary variable $z$ to record the number of restored rise decorations, so that we seek the coefficient of $z^v$. Set
    \begin{align*}
        T_{s,h}(z) & \coloneqq \sum_{\substack{\epsilon_i\in\{0,1\} \\ \sum_{i=1}^s\epsilon_i=h}}
        q^{\sum_{i=1}^s(i-1)\epsilon_i} \prod_{i=1}^s \left(1+zq^{(1+\epsilon_i)(s-i)-\epsilon_{>i}}\right) \\
        U_{s,c}(z) & \coloneqq \sum_{\substack{c_i\geq0\\\sum_{i=1}^s c_i=c}}
        q^{\sum_{i=1}^s(i-1)c_i} \prod_{i=1}^s\prod_{j=0}^{c_i-1} \left(1+zq^{2(s-i) + c_{\geq i} - j}\right).
    \end{align*}

    Since we have $v$ restored rise decorations, the contribution to $\dinv$ of the restored subpaths is given by the coefficient of $z^v$ in $T_{s,h}(z)U_{s,c}(z)$, where $s=p+h$ and $c=b-h$.

    We use the following two identities:
    \begin{lemma}
        \label{lem:TU_formulas}
        For $s \geq h \geq 0$ and $c \geq 0$, we have
        \begin{align}
            T_{s,h}(z) & = q^{\binom{h}{2}}\qbinom{s}{h} \prod_{j=0}^{s-1}(1+zq^j), \label{eq:T_formula} \\
            U_{s,c}(z) & = \qbinom{s+c-1}{c} \prod_{j=s}^{s+c-1}(1+zq^j). \label{eq:U_formula}
        \end{align}
    \end{lemma}

    \begin{proof}
        We will prove the first statement by induction on $s$, and the second by induction on $s+c$.

        We have that $T_{0,0}(z)=1$ and $T_{s,h}(z)=0$ for $h<0$ or $h>s$. Moreover $U_{s,0}(z)=1$ and $U_{0,c}(z)=0$ for $c>0$. These provide the base cases for the inductions

        Separating the cases $\epsilon_1=0$ and $\epsilon_1=1$, for $s\ge1$ and $0\le h\le s$ we have 
        \begin{align*}
            T_{s,h}(z) & = q^h(1+zq^{s-1-h}) T_{s-1,h}(z)+q^{h-1}(1+zq^{2s-h-1}) T_{s-1,h-1}(z) \\
            & = q^{\binom{h}{2}} \prod_{j=0}^{s-2}(1+zq^j)
                \left(q^h(1+zq^{s-1-h})\qbinom{s-1}{h}+(1+zq^{2s-h-1})\qbinom{s-1}{h-1}\right) \\
            & = q^{\binom{h}{2}} \prod_{j=0}^{s-2}(1+zq^j) 
                \left(q^h\qbinom{s-1}{h} + \qbinom{s-1}{h-1} + 
                    zq^{s-1}\left(\qbinom{s-1}{h} +q^{s-h}\qbinom{s-1}{h-1}\right) \right) \\
            & = q^{\binom{h}{2}} \prod_{j=0}^{s-2}(1+zq^j) \left(\qbinom{s}{h}+zq^{s-1}\qbinom{s}{h}\right) \\
            & = q^{\binom{h}{2}} \qbinom{s}{h} \prod_{j=0}^{s-1}(1+zq^j),
        \end{align*}
        where the second equality follows from the induction hypothesis
        and the fourth from the $q$-Pascal identities in \Cref{eq:q_pascal}. This proves \Cref{eq:T_formula}.

        Similarly, separating the cases $c_1=0$ and $c_1>0$, we obtain, for
        $s\geq 1$ and $c\geq 1$,
        \begin{align*}
            U_{s,c}(z) & = q^c U_{s-1,c}(z) + \left(1+zq^{2s+c-2}\right) U_{s,c-1}(z) \\
            & = \prod_{j=s}^{s+c-2}(1+zq^j)
            \left( q^c(1+zq^{s-1})\qbinom{s+c-2}{c} +\left(1+zq^{2s+c-2}\right)\qbinom{s+c-2}{c-1} \right) \\
            & = \prod_{j=s}^{s+c-2}(1+zq^j)
            \left[ q^c\qbinom{s+c-2}{c} + \qbinom{s+c-2}{c-1} \right. \\
            & \quad \left. {} + zq^{s+c-1} \left( \qbinom{s+c-2}{c} + q^{s-1} \qbinom{s+c-2}{c-1} \right) \right] \\
            & = \prod_{j=s}^{s+c-2}(1+zq^j) \left(1+zq^{s+c-1}\right) \qbinom{s+c-1}{c} \\
            & = \qbinom{s+c-1}{c} \prod_{j=s}^{s+c-1}(1+zq^j).
        \end{align*}
        Indeed, when $c_1=0$, deleting the initial zero shifts the indices and contributes the factor $q^c$.  When $c_1>0$, replacing $c_1$ by $c_1-1$ yields the second term of the recurrence.
        The second equality above then follows from the induction hypothesis, while the fourth follows from the two $q$-Pascal identities in \Cref{eq:q_pascal}. This proves \Cref{eq:U_formula}.
    \end{proof}

    Since $s+c=p+b$ and by \Cref{lem:TU_formulas}, the contribution to $\dinv$ is
    \[ T_{s,h}(z)U_{s,c}(z) = q^{\binom{h}{2}} \qbinom{p+h}{h}\qbinom{p+b-1}{b-h} \prod_{j=0}^{p+b-1}(1+zq^j). \]
    
    Taking the coefficient of $z^v$, using the $q$-binomial theorem, \Cref{thm:q-binomial}
    \[
        [z^v]\prod_{j=0}^{p+b-1}(1+zq^j)
        =q^{\binom{v}{2}}\qbinom{p+b}{v},
    \]
    and including the already computed contributions, we obtain
    \[
        \sum_{\pi\mapsto\bar\pi} q^{\dinv(\pi)-\dinv(\bar\pi)}
        = q^{\binom{p+h}{2}} \qbinom{r}{p+h} q^{\binom{h}{2}} \qbinom{p+h}{h}
            \qbinom{p+b-1}{b-h} q^{\binom{v}{2}} \qbinom{p+b}{v} \qbinom{p+b+u-1}{u},
    \]
    where the sum is over preimages with the fixed parameters $b,p,u,v,h$. 
    
    Multiplying by
    $t^{n-r-k-l}t^{l-b}t^{\area(\bar\pi)}q^{\dinv(\bar\pi)}$
    and summing over $\bar\pi$ and all admissible $b,p,u,v,h$ proves the recursion.
\end{proof}

\begin{remark}
    Unfortunately, the argument used in \Cref{prop:combinatorial_recursion} cannot be simplified to isolate the contribution of the $\dinv$ of type \eqref{enum:ndinv_1} between letters $N$ and $V$ or between letters $N$ and $D$ as a $q$-binomial independent of the contribution of the decorated rises, in a similar fashion as in the proof \cite[Theorem~4.11]{DAdderioIraci2023}. Indeed, the relative order between these letters also affects the contributions to the $\dinv$ related to the rise decorations on the first diagonal, and the corresponding terms do not factor nicely. An example of this correlation is shown in \Cref{ex:no-factorization}.
    
    This technical difficulty is the reason why finding the correct statistic $\dinv$ for the Theta conjecture was so challenging. The algebraic identities could not be interpreted combinatorially directly in terms of attacking pairs, but required a more subtle combinatorial analysis.
    
    However, \Cref{lem:TU_formulas} shows that putting together these contributions gives us a nice formula, allowing us to prove the Schröder case of the Theta conjecture.
\end{remark}

\begin{example}
    \label{ex:no-factorization}
    Let $n=4$, $r=2$ and $k=l=d=1$. The coefficient of $\LD_{q,t}(1,1)^{\ast 0, \bullet 0}_{\circ 0}$ in the recursion to compute $\LD_{q,t}(4,2)^{\ast 1, \bullet 1}_{\circ 1}$ is
    \[ \sum_{h=0}^1 q^{\binom{h}{2}} \qbinom{h+1}{h} q^{\binom{h+1}{2}} \qbinom{2}{h+1} \qbinom{2}{1},\]
    obtained by selecting the term with $b=p=v=1$ and $u=0$. Fixing $h=1$ this reduces to 
    \[q(q+1)(q+1).\]
    In this case $\bar\pi$ is the unique path of size $1$ with neither decorations nor marked peaks, and its preimages with these parameters are the four paths in \Cref{fig:path_cont}.
    
    The deleted steps of the first two have the same relative order, with word $W_1=NDN$, and their $\dinv$ values are, respectively $3$ and $1$, so their contribution is $q(q^2+1)$. In a similar way, the deleted steps of the other two have the same relative order $W_2=NND$, and they both have $\dinv$ equal to $2$, contributing $2q^2$ to the coefficient.
    
    Neither of these two contributions can be isolated in the $q$-binomial coefficients of the recursion, so we cannot compute the contributions to the $\dinv$ separately by type of deleted steps, as in \cite[Theorem~4.11]{DAdderioIraci2023}, and a global argument is required.
\end{example}

\begin{figure}[htbp]
    \definecolor{recOrange}{RGB}{255,128,0}
    \definecolor{recViolet}{RGB}{108,83,220}
    \definecolor{recGreen}{RGB}{0,128,0}
    \definecolor{recPink}{RGB}{250,110,180}
    \definecolor{recBlue}{RGB}{0,130,170}
    \definecolor{recRed}{RGB}{190,0,0}

    \centering
    \begin{tikzpicture}[
        x=.38cm,
        y=.38cm,
        rec grid/.style={step=1, gray!55, line width=.3pt},
        rec diagonal/.style={gray!55, line width=.4pt},
        rec path/.style={line width=1.5pt, line cap=butt, line join=miter},
        rec sign/.style={inner sep=0pt, font=\small},
        rec peak/.style={line width=.8pt, fill=white}
    ]
        \begin{scope}[shift={(0,0)}]
            \draw[rec grid] (0,0) grid (4,4);
            \draw[rec diagonal] (0,0)--(4,4);
            \draw[rec path, draw=recBlue] (0,0)--(0,1);
            \draw[rec path, draw=recOrange] (0,1)--(0,2)--(1,2);
            \draw[rec path, draw=recBlue]
                (1,2)--(2,2)--(2,3)--(3,3)--(3,4)--(4,4);
            \node[rec sign] at (-.5,1.5) {$\ast$};
            \node[rec sign] at (1.5,2.5) {$\bullet$};
            \draw[rec peak, draw=black] (2.5,2.5) circle[radius=1.05mm];
        \end{scope}

        \begin{scope}[shift={(6,0)}]
            \draw[rec grid] (0,0) grid (4,4);
            \draw[rec diagonal] (0,0)--(4,4);
            \draw[rec path, draw=recBlue]
                (0,0)--(0,1)--(1,1)--(1,2)--(2,2)--(2,3);
            \draw[rec path, draw=recOrange] (2,3)--(2,4)--(3,4);
            \draw[rec path, draw=recBlue] (3,4)--(4,4);
            \node[rec sign] at (1.5,3.5) {$\ast$};
            \node[rec sign] at (.5,1.5) {$\bullet$};
            \draw[rec peak, draw=black] (1.5,1.5) circle[radius=1.05mm];
        \end{scope}

        \begin{scope}[shift={(12,0)}]
            \draw[rec grid] (0,0) grid (4,4);
            \draw[rec diagonal] (0,0)--(4,4);
            \draw[rec path, draw=recBlue] (0,0)--(0,1);
            \draw[rec path, draw=recOrange] (0,1)--(0,2)--(1,2);
            \draw[rec path, draw=recBlue]
                (1,2)--(2,2)--(2,3)--(3,3)--(3,4)--(4,4);
            \node[rec sign] at (-.5,1.5) {$\ast$};
            \node[rec sign] at (2.5,3.5) {$\bullet$};
            \draw[rec peak, draw=black] (3.5,3.5) circle[radius=1.05mm];
        \end{scope}

        \begin{scope}[shift={(18,0)}]
            \draw[rec grid] (0,0) grid (4,4);
            \draw[rec diagonal] (0,0)--(4,4);
            \draw[rec path, draw=recBlue]
                (0,0)--(0,1)--(1,1)--(1,2);
            \draw[rec path, draw=recOrange] (1,2)--(1,3)--(2,3);
            \draw[rec path, draw=recBlue]
                (2,3)--(3,3)--(3,4)--(4,4);
            \node[rec sign] at (.5,2.5) {$\ast$};
            \node[rec sign] at (2.5,3.5) {$\bullet$};
            \draw[rec peak, draw=black] (3.5,3.5) circle[radius=1.05mm];
        \end{scope}
    \end{tikzpicture}
    \caption{The four paths in $\LD(4,2)^{\ast 1, \bullet 1}_{\circ 1}$ with $b=p=v=h=1$ and $u=0$.}
    \label{fig:path_cont}
\end{figure}

\subsection{A detailed example}
\label{ssc:combinatorial-deletion-example}

We take a closer look at the preimages of a path $\bar\pi$ in $\LD(n-r-b, u+v)^{\ast k-v, \bullet l-b}_{\circ d-(r-p)}$ under the map $\pi \mapsto \bar\pi$ defined in the proof of \Cref{prop:combinatorial_recursion}. We will illustrate how the parameters $b,p,u,v,h$ are determined and how they affect the $\dinv$ of the preimages, via a detailed example.

We consider the path $\pi$ in \Cref{fig:combinatorial-deletion}, which is an element of $\LD(18,5)^{\ast 4, \bullet 6}_{\circ 4}$. The path $\bar\pi$ is obtained by deleting the vertical steps starting on the main diagonal and the horizontal steps immediately preceding the next return to the main diagonal. In this case, we have $b=4$, $p=2$, $u=2$, $v=3$, and $h=2$. The path $\bar\pi$ is an element of $\LD(9,5)^{\ast 1, \bullet 2}_{\circ 1}$.

In the deletion map depicted in the figure, red steps and decorations are removed. The other colours identify the five touch-subpaths of $\bar\pi$ and their preimages.
The green and pink subpaths are concatenated inside the same component of $\pi$, between its deleted outer steps.
The letters below the main diagonal record $W=NVVDNDNPN$. Stars decorate rises, bullets decorate valleys, and circles mark peaks.

\begin{figure}[htbp]
    \definecolor{recOrange}{RGB}{255,128,0}
    \definecolor{recViolet}{RGB}{108,83,220}
    \definecolor{recGreen}{RGB}{0,128,0}
    \definecolor{recPink}{RGB}{250,110,180}
    \definecolor{recBlue}{RGB}{0,130,170}
    \definecolor{recRed}{RGB}{190,0,0}

    \centering
    \begin{tikzpicture}[
        x=.38cm,
        y=.38cm,
        rec grid/.style={step=1, gray!55, line width=.3pt},
        rec diagonal/.style={gray!55, line width=.4pt},
        rec path/.style={line width=1.5pt, line cap=butt, line join=miter},
        rec sign/.style={inner sep=0pt, font=\small},
        rec peak/.style={line width=.8pt, fill=white}
    ]
        % Original path
        \draw[rec grid] (0,0) grid (18,18);
        \draw[rec diagonal] (0,0)--(18,18);
        % \draw[rec path, draw=recRed]
        %     (0,0)--(0,3)--(2,3)--(2,4)--(4,4)--(4,5)
        %     --(5,5)--(5,7)--(7,7)--(7,8)--(8,8)--(8,11)
        %     --(10,11)--(10,12)--(11,12)--(11,13)
        %     --(13,13)--(13,14)--(14,14)--(14,16)
        %     --(16,16)--(16,17)--(17,17)--(17,18)--(18,18);

        \draw[rec path,draw=recRed]
            (0,0)--(0,1);

        \draw[rec path,draw=recRed]
            (3,4)--(4,4)--(4,5)
            --(5,5)--(5,6);
            
        \draw[rec path,draw=recRed]
            (6,7)--(7,7)--(7,8)--(8,8)--(8,9);

        \draw[rec path,draw=recRed]
            (12,13)--(13,13)--(13,14)--(14,14)--(14,15);

        \draw[rec path,draw=recRed]
            (15,16)--(16,16)--(16,17)--(17,17)--(17,18)--(18,18);

        % Surviving coloured subpaths
        \draw[rec path,draw=recOrange]
            (0,1)--(0,3)--(2,3)--(2,4)--(3,4);

        \draw[rec path,draw=recViolet]
            (5,6)--(5,7)--(6,7);

        % Green and pink meet at (11,12):
        % use the same sharp junction style as in Fig. pp0.
        \draw[
            rec path,
            draw=recGreen,
            -sharp >,
            sharp angle=45
        ]
            (8,9)--(8,11)--(10,11)--(10,12)--(11,12);

        \draw[
            rec path,
            draw=recPink,
            sharp <-,
            sharp angle=45
        ]
            (11,12)--(11,13)--(12,13);

        \draw[rec path,draw=recBlue]
            (14,15)--(14,16)--(15,16);

        % Decorations that disappear are red; surviving decorations are black.
        \foreach \x/\y in {-.5/1.5,4.5/6.5,13.5/15.5}
            \node[rec sign,text=recRed] at (\x,\y) {$\ast$};

        \node[rec sign] at (-.5,2.5) {$\ast$};

        \foreach \x/\y in {3.5/4.5,4.5/5.5,6.5/7.5,12.5/13.5}
            \node[rec sign,text=recRed] at (\x,\y) {$\bullet$};

        \foreach \x/\y in {1.5/3.5,9.5/11.5}
            \node[rec sign] at (\x,\y) {$\bullet$};

        \foreach \x/\y in {7.5/7.5,13.5/13.5,16.5/16.5}
            \draw[rec peak,draw=recRed]
                (\x,\y) circle[radius=1.05mm];

        \draw[rec peak,draw=black]
            (.5,2.5) circle[radius=1.05mm];

        % Word below the diagonal
        \foreach \j/\lab in {
            0/N, 4/V, 5/V, 7/D, 8/N, 13/D, 14/N, 16/P, 17/N
        }
            \node[font=\scriptsize,text=gray!75!black]
                at ({\j+.45},{\j-.15}) {${\lab}$};

        \node at (9,-1.2) {$\pi$};

        \draw[->,line width=.7pt]
            (18.8,9)--(20.7,9);

        % Reduced path
        \begin{scope}[shift={(21.5,4.5)}]
            \draw[rec grid] (0,0) grid (9,9);
            \draw[rec diagonal] (0,0)--(9,9);

            % Orange: sharp joint at the endpoint (3,3)
            \draw[
                rec path,
                draw=recOrange,
                -sharp >,
                sharp angle=45
            ]
                (0,0)--(0,2)--(2,2)--(2,3)--(3,3);

            % Violet: sharp joint at both ends
            \draw[
                rec path,
                draw=recViolet,
                sharp <-sharp >,
                sharp angle=45
            ]
                (3,3)--(3,4)--(4,4);

            % Green: sharp joint at both ends
            \draw[
                rec path,
                draw=recGreen,
                sharp <-sharp >,
                sharp angle=45
            ]
                (4,4)--(4,6)--(6,6)--(6,7)--(7,7);

            % Pink: sharp joint at both ends
            \draw[
                rec path,
                draw=recPink,
                sharp <-sharp >,
                sharp angle=45
            ]
                (7,7)--(7,8)--(8,8);

            % Blue: sharp joint only at its starting point
            \draw[
                rec path,
                draw=recBlue,
                sharp <-,
                sharp angle=45
            ]
                (8,8)--(8,9)--(9,9);

            % Surviving decorations
            \node[rec sign] at (-.5,1.5) {$\ast$};

            \foreach \x/\y in {1.5/2.5,5.5/6.5}
                \node[rec sign] at (\x,\y) {$\bullet$};

            \draw[rec peak,draw=black]
                (.5,1.5) circle[radius=1.05mm];

            \node at (4.5,-1.2) {$\bar\pi$};
        \end{scope}
    \end{tikzpicture}

    \caption{The deletion map for a path of size $18$. Red steps and decorations are deleted.}
    \label{fig:combinatorial-deletion}
\end{figure}

\begin{example}
    \label{ex:combinatorial-deletion}
    Consider the paths in \Cref{fig:combinatorial-deletion}, with north steps indexed from bottom to top. Their area words are
    \[
    \begin{aligned}
    a(\pi)&=(0,1,2,1,0,0,1,0,0,1,2,1,1,0,0,1,0,0),\\
    a(\bar\pi)&=(0,1,0,0,0,1,0,0,0).
    \end{aligned}
    \]
    Their decorations and markings are
    \[
    \begin{aligned}
    \dr(\pi)&=\{2,3,7,16\},
    &\dr(\bar\pi)&=\{2\},\\
    \dv(\pi)&=\{4,5,6,8,12,14\},
    &\dv(\bar\pi)&=\{3,7\},\\
    \mpe(\pi)&=\{3,8,14,17\},
    &\mpe(\bar\pi)&=\{2\}.
    \end{aligned}
    \]
    The star valleys are
    \[
        \dv^{\ast}(\pi)=\{4,5,8\},
        \qquad
        \dv^{\ast}(\bar\pi)=\{3\}.
    \]

    The deleted north steps and their types are
    \[
        \begin{array}{c|ccccccccc}
            \text{index in }\pi & 1 & 5 & 6 & 8 & 9 & 14 & 15 & 17 & 18 \\ \hline
            \text{type}         & N & V & V & D & N & D  & N  & P  & N
        \end{array}
    \]
    The star and nonstar valleys at $4$ and $12$ survive as
    valleys $3$ and $7$ of $\bar\pi$, respectively.  The decorated rise and
    marked peak at $3$ also survive, both at index $2$.

    For the area statistic, we obtain
    \[
        \area(\pi)=11-(1+2+1+1)=6,
        \qquad
        \area(\bar\pi)=2-1=1.
    \]
    Let $D_1,D_2,D_3,D_4$ count the four types of dinv pairs in the order listed in the definition, let $V_0$ count the nonstar valleys, and let $R$ count the rise--valley correction pairs. Then
    \[
        \begin{array}{c|rrrr|rr|r}
                     & D_1 & D_2 & D_3 & D_4 & V_0 & R & \dinv \\ \hline
            \pi      & 28  & 22  & 4   & 10  & 3   & 8 & 53    \\
            \bar\pi  & 14  & 9   & 1   & 4   & 1   & 2 & 25
        \end{array}
    \]
    The correction pairs counted by $R$ are
    \[
        \begin{aligned}
            \pi:\quad
                &(2,5),(2,6),(2,8),(2,14),\\
                &(3,4),(3,12),(7,8),(7,14),\\
            \bar\pi:\quad
                &(2,3),(2,7).
        \end{aligned}
    \]

    The parameters are
    \[
        (n,r,k,l,d)=(18,5,4,6,4),
        \qquad
        (b,p,u,v,h)=(4,2,2,3,2).
    \]
    In particular, $\bar\pi$ has size $9$ and $u+v=5$ touches.  The area
    difference agrees with the recursion:
    \[
        \area(\pi)-\area(\bar\pi)=6-1=5=n-r-b-k.
    \]

    The word is $W=(NVVD)(ND)(NP)(N)$, with associated parameters $s = p+h = 4$ and 
    \[
        \begin{gathered}
            W=(NVVD)(ND)(NP)(N), \qquad s=4,\\
            (b_0,b_1,b_2,b_3,b_4)=(0,0,0,1,0),\\
            (c_1,c_2,c_3,c_4)=(2,0,0,0),\qquad
            (\varepsilon_1,\varepsilon_2,\varepsilon_3,\varepsilon_4)
                =(1,1,0,0).
        \end{gathered}
    \]
    The $\dinv$ contribution of the steps of $W$ is
    \[
        \binom{4}{2}
        +\sum_{i=1}^{4}(i-1)(c_i+\varepsilon_i)
        +\#\{N\text{ before }P\}
        =6+1+3=10.
    \]
    Indexing the available positions from right to left by
    $j=0,\ldots,5$, the insertion data are
    \[
        \begin{aligned}
            (u_0,\ldots,u_5)&=(0,0,2,0,0,0),\\
            (\delta_0,\ldots,\delta_5)&=(0,1,0,1,0,1).
        \end{aligned}
    \]
    The entry $u_2=2$ records the green and pink touch-subpaths in the same
    position.  Their total ordinary contribution is
    \[
        \sum_j ju_j=2\cdot2=4.
    \]
    The three star insertions realize the three cases in the proof.  The
    insertion at $2$ makes the unmarked valley at $5$ a star valley and has exponent
    $0+2(4-1)+2=8$; the insertion at $7$ makes the marked valley at $8$
    a star valley and has exponent $2(4-1)-1=5$; and the insertion at $16$ creates
    no star valley and has exponent $4-3=1$.  These exponents
    include the placement weights of the corresponding subpaths.

    Consequently,
    \[
        \dinv(\pi)-\dinv(\bar\pi)=10+4+(8+5+1)=28,
    \]
    in agreement with the direct calculation $53-25=28$.
    Therefore,
    \[
        q^{\dinv(\pi)}t^{\area(\pi)}
        =q^{28}t^5q^{\dinv(\bar\pi)}t^{\area(\bar\pi)}
        =q^{53}t^6.
    \]
    For comparison, the recursion gives the following relative weight for
    all preimages of this fixed $\bar\pi$ with these parameters:
    \[ t^5 q^4 \qbinom{5}{2} \qbinom{8}{4} \qbinom{6}{3} \qbinom{7}{2}. \]
\end{example}

\section{Artificial intelligence}
\label{sec:ai}

Artificial intelligence has been used in this paper to assist both with the mathematical research and with the writing of the paper. In this section we describe how.

\subsection{Scaffolding}

The statistic $\dinv$ was discovered with the help of Aristotle \cite{Achim2025Aristotle}, available for free at \url{https://aristotle.harmonic.fun}. Aristotle was prompted with some literature \cites{DAdderioIraci2023,IraciNadeauVandenWyngaerd2024Smirnov,DIIP2026LeavingTheHall}, the key features expected from the statistic (nondecorated steps, decorated valleys, star valleys), a Python/SageMath implementation of the symmetric function side of the conjecture, and the instructions to find a statistic that would match the symmetric function side for $n-k-l=1$ at first, and in general later.

Aristotle searched for a statistic of type
\[ \dinv(\pi) = \sum_{i<j} w(cls_i, cls_j, \delta_{ij}, \epsilon_{ij})  +  \sum_i u(cls_i), \]
where $cls_i$ encodes the type of the $i\th$ step (not decorated, decorated rise, decorated valley, star valley), $\delta_{ij}$ encodes the relative position of the $i\th$ and $j\th$ steps (same diagonal, adjacent diagonals, far away), $\epsilon_{ij}$ encodes the inequality between $w_i$ and $w_j$, with constraints given by the known distributions. For objects of size up to $6$, this gives a linear system with $120$ unknowns; matching the symmetric function side for $n \leq 6$ produces a system of $1609$ equations with rank $110$. The solution space has dimension $10$, and gives $5$ different solutions with coefficients in $\{-1,0,1\}$.

The solution presented in this paper is the smallest in norm. The solutions match for $n \leq 5$ but differ for $n = 6$; other solutions were discarded because they did not match the symmetric function side for $n=7$; the one we present in this paper was then validated up to $n=10$. 

Notably, GPT-6 Astra Ultra failed on the same task without human assistance\footnote{GPT-6 Astra Ultra also suggested to remove this line during the revising process.}.

\subsection{Editing}

The authors subsequently used AI-assisted tools during revision for proof checking, producing examples, and language editing. The authors carefully checked, revised, and rewrote all resulting arguments, and take full responsibility for the final content.

\bibliographystyle{amsalpha}
\bibliography{references}

\end{document}